\newtheorem{theorem}{Theorem}
\newtheorem{lem}[theorem]{Lemma}
\newtheorem{cor}[theorem]{Corollary}
\newtheorem{prop}[theorem]{Proposition}
\def\scalprod#1#2{({#1}\, |\, {#2})}
\newcommand{\beeq}{\begin{eqnarray*}}
\newcommand{\eneq}{\end{eqnarray*}}
\DeclareMathOperator{\coker}{coker}
\def\SS{\EuScript S}
\newcommand{\quot}[2]{{#1/#2}}
\author{Christine Bachoc}
\address{Institut de Math\'ematiques de Bordeaux, UMR 5251, universit\'e de Bordeaux, 351 cours de la Lib\'eration, 33400 Talence, France.}
\email{Christine.Bachoc@math.u-bordeaux.fr}
\author{Oriol Serra}
\address{Department of Mathematics, Universitat Polit\`ecnica de Catalunya  and Barcelona Graduate School of Mathematics, Barcelona, Spain.}
\email{oriol.serra@upc.edu}
\author{Gilles Z\'emor}
\address{Institut de Math\'ematiques de Bordeaux, UMR 5251, universit\'e de Bordeaux, 351 cours de la Lib\'eration, 33400 Talence, France.}
\email{zemor@math.u-bordeaux.fr}
\title[]{Revisiting Kneser's Theorem for Field Extensions}
\date{}
\begin{document}

\begin{abstract}
  A Theorem of Hou, Leung and Xiang 
  generalised Kneser's addition Theorem to field extensions.
  This theorem was known to be valid only in separable extensions, and
  it was a conjecture of Hou that it should be valid for all extensions. 
  We give an alternative proof of the theorem that also holds in the
  non-separable case, thus solving Hou's conjecture. This result is a
  consequence of a strengthening of Hou et al.'s theorem that is
  inspired by an addition theorem of
  Balandraud and is obtained by combinatorial methods transposed and
  adapted to the extension
  field setting.
%
\end{abstract}

\maketitle

\section{Introduction}
\let\thefootnote\relax\footnote{{\it Mathematics Subject Classification:} 11P70.\\
  Financial support for this research was provided by the ``Investments for the future''  Programme IdEx Bordeaux CPU (ANR-10-IDEX- 03-02) and  the Spanish Ministerio de Econom\'ia y Competitividad under project MTM2014-54745-P}

Let $G$ be an abelian group and let $S$ and $T$ be finite
subsets of $G$. Denote by $S+T$ the set defined by $\{s+t : s\in S,t\in T\}$.
Sets $S+T$ are often referred to as {\em sumsets}. Kneser's classical addition
Theorem \cite{kneser} states that sufficiently small sumsets are periodic, meaning they are stabilized by non-zero group elements.

\begin{theorem}[Kneser]\label{thm:knesergroup}
  Let $G$ be an abelian group and let $S$ be non-empty, finite subsets of~$G$.
  Then one of the following holds:
  \begin{itemize}
  \item $|S+T|\geq |S|+|T|-1$,
  \item there exists a subgroup $H\neq\{0\}$ of $G$ such that
        $S+T+H=S+T$.
  \end{itemize}
\end{theorem}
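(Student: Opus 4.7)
The plan is to prove the theorem by induction on $|T|$. The base case $|T|=1$ is immediate since $S+T$ is then a translate of $S$, giving $|S+T|=|S|=|S|+|T|-1$. For the inductive step, assume $|T|\geq 2$ and suppose for contradiction that both conclusions fail: that $|S+T|<|S|+|T|-1$ and that the stabilizer
\[
H \;:=\; \{g\in G : g+(S+T)=S+T\}
\]
is trivial. The goal is to derive a contradiction.

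The central technical tool is the Dyson $e$-transform. For $a\in G$, define the complementary pairs
\[
S^a = S\cup(S+a), \qquad T^a = T\cap(T-a),
\]
\[
S_a = S\cap(S+a), \qquad T_a = T\cup(T-a).
\]
A direct verification gives both $S^a+T^a\subseteq S+T$ and $S_a+T_a\subseteq S+T$, while $|S^a|+|S_a|=2|S|$ and $|T^a|+|T_a|=2|T|$, so at least one of the two transformed pairs has cardinality sum at least $|S|+|T|$. Choosing $a=t_1-t_2$ for distinct $t_1,t_2\in T$ ensures that $T^a$ is a non-empty proper subset of $T$, which makes the inductive hypothesis applicable to the pair $(S^a,T^a)$ (or, symmetrically, to $(S_a,T_a)$).

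Applying the induction hypothesis, the reduced pair either satisfies the Cauchy--Davenport-type inequality or admits a non-trivial stabilizer~$K$. In the first case, combining the inequality with the inclusion $S^a+T^a\subseteq S+T$ and the cardinality bound $|S^a|+|T^a|\geq|S|+|T|$ contradicts the assumption $|S+T|<|S|+|T|-1$ directly. In the second case, we must show that $K$ also stabilizes $S+T$, contradicting $H=\{0\}$.

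This final lifting step is the main obstacle and the true heart of the proof. The standard route is to pass to the quotient $G/K$, apply the induction hypothesis to the images of $S$ and $T$ there, and reconstruct a stabilizer for $S+T$ from the stabilizer of the image sumset; an alternative is to iterate the $e$-transform over all valid choices of $a$ and argue that every stabilizer produced in this way must lie in $H$. Either approach requires careful bookkeeping—in particular to control the elements of $(S+T)\setminus(S^a+T^a)$ and verify that the periodicity provided by $K$ is not broken by them—but once this is established, the induction closes and the theorem follows.
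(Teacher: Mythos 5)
First, a point of reference: the paper does not prove Theorem~\ref{thm:knesergroup} at all --- it is quoted as Kneser's classical theorem with a citation to \cite{kneser} --- so there is no in-paper proof to compare against. Your proposal must therefore stand on its own, and as written it is an outline whose decisive step is left open rather than a proof.

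Two concrete gaps. First, the induction is on $|T|$, but when the pair with the large cardinality sum is $(S_a,T_a)$ you have $T_a=T\cup(T-a)\supseteq T$, so the inductive hypothesis does not apply to that pair; the word ``symmetrically'' hides a genuine asymmetry in the transform. (Also, $T^a=T\cap(T-a)$ is a proper subset of $T$ only when $a=t_1-t_2$ does not stabilize $T$, which requires an argument in the case where $T$ is itself periodic.) Second, and more seriously, the case where the transformed pair acquires a nontrivial stabilizer $K$ is precisely the crux of Kneser's theorem, and you do not show that $K$ (or any nontrivial subgroup) stabilizes $S+T$. Since $S^a+T^a$ may be a proper subset of $S+T$, the elements of $(S+T)\setminus(S^a+T^a)$ can destroy $K$-periodicity, and it is simply false in general that the stabilizer of the transformed sumset transfers to $S+T$ without substantial additional work. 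The known proofs close this gap with a considerably more intricate induction --- typically on $|S+T|$ or on the index of $K$, passing to $G/K$ and counting the $K$-cosets met by $S$, $T$ and $S+T$ --- none of which is carried out here. Naming the two candidate ``routes'' and asserting that ``once this is established, the induction closes'' identifies the difficulty but does not resolve it; the proposal is therefore incomplete at exactly the point where Kneser's theorem is hard.
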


Kneser's Theorem is more usually stated as $|S+T|\geq |S|+|T|-|H(S+T)|$
where $H(S+T)=\{x\in G : x+S+T=S+T\}$ denotes the stabilizer of $S+T$. However,
this more precise inequality is easily derived
from Theorem~\ref{thm:knesergroup} which we chose to formulate in this way because it is better suited to
the discussion of the strengthened versions that will follow.
Kneser's Theorem is one of the founding theorems of additive combinatorics and
has many applications to this field, and more generally to situations
where statements on the structure of sumsets are useful, see \cite{Nat,tv} for
example.

Consider now the context of field extensions.
Let $F$ be a field and let $L$ be an extension field of
$F$. If $S$ and $T$ are $F$--vector subspaces of $L$, we shall denote
by $ST$ the $F$-linear span of the set of products $st$, $s\in S$,
$t\in T$.

The following Theorem was obtained by Hou, Leung and Xiang
\cite{hou2002}, as a transposition to field extensions 
of Kneser's classical addition Theorem. 

\begin{theorem}[Hou, Leung and Xiang] \label{thm:kneser}
Let $F$ be a field, $L/F$ a field extension, and let $S$ and $T$ be $F$-subvectorspaces  of $L$ of finite dimension. 
Suppose that every algebraic element in $L$ is separable over
$F$. Then
one of the following holds:
\begin{itemize}
\item $\dim ST\ge \dim S+\dim T-1,$
\item there exists a subfield $K$, $F\subsetneq K\subset L$, such that
$STK=ST$.
\end{itemize}
\end{theorem}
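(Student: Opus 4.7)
My plan is organized around the observation that separability is irrelevant to the \emph{structural} part of the theorem and only apparently enters the \emph{quantitative} part. For any nonzero finite-dimensional $F$-subspace $U\subseteq L$, set
\[
H(U) := \{x\in L : xU\subseteq U\}.
\]
Then $H(U)$ is an $F$-subalgebra of $L$; moreover, fixing $u_0\in U\setminus\{0\}$, the map $x\mapsto xu_0$ is an $F$-linear injection $H(U)\hookrightarrow U$, so $H(U)$ is finite-dimensional over $F$. A finite-dimensional $F$-subalgebra of the field $L$ is automatically a field, because it is an integral domain and multiplication by a nonzero element is an injective, hence bijective, $F$-linear endomorphism. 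In particular $H(ST)$ is \emph{always} a subfield of $L$ containing $F$, whether $L/F$ is separable or not. Consequently, if $H(ST)\supsetneq F$ the second alternative of the theorem is already satisfied with $K:=H(ST)$, and the problem reduces to proving the implication
\[
H(ST)=F \;\Longrightarrow\; \dim ST \ge \dim S + \dim T - 1.
\]

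To establish this implication I would proceed by induction on $\dim T$, the case $\dim T = 1$ being immediate since then $ST$ is a scalar multiple of $S$, of dimension $\dim S = \dim S + \dim T - 1$. For the inductive step, assume $\dim ST < \dim S + \dim T - 1$ and seek to produce a nontrivial element of $H(ST)$, contradicting the hypothesis. Following Balandraud's strategy in the group setting, I would introduce the \emph{multiplicative saturations}
\[
\widetilde S := \{x\in L : xT\subseteq ST\}, \qquad \widetilde T := \{x\in L : xS\subseteq ST\},
\]
both finite-dimensional $F$-subspaces of $L$ (fixing $t_0\in T$, respectively $s_0\in S$, nonzero yields injections $\widetilde S\hookrightarrow ST$, $\widetilde T\hookrightarrow ST$), containing $S$, $T$ and still satisfying $\widetilde S\,T = S\,\widetilde T = ST$. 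If either saturation strictly enlarges the corresponding space, one restarts with the enlarged pair, which has the same product $ST$ but better bookkeeping; otherwise $(S,T)$ is already saturated, and one applies a vector-space version of the Dyson transform to a hyperplane $T_0\subsetneq T$ and a carefully chosen $a\in L^\times$. Feeding the resulting pair into the induction hypothesis then forces a nonzero element of $L\setminus F$ into $H(ST)$, which is the desired contradiction.

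The principal difficulty is the multiplicative Dyson transform. The dimension identity
\[
\dim(S+aT)+\dim(S\cap aT)=\dim S+\dim T
\]
is the exact linear counterpart of $|S\cup(a+T)|+|S\cap(a+T)| = |S|+|T|$, but the product $(S+aT)\cdot(S\cap aT)$ is not contained in $ST$ in general, so the ``sumset'' invariant is not automatically preserved as in the additive group case. The role of the Balandraud-style saturation is precisely to make the transform legal: after passing to $\widetilde S$ and $\widetilde T$, one can choose $a$ so that all spurious terms re-enter $ST$, and the recursion closes. Getting this choice right is the technical core of the argument, and it replaces the derivation and trace-form computations of \cite{hou2002} that necessitated separability; once the correct transform is available, the remainder of the proof is formal and, combined with the automatic field property of $H(ST)$ established at the outset, yields Theorem~\ref{thm:kneser} for arbitrary, possibly inseparable, extensions $L/F$.
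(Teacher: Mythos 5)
Your opening reduction is correct and unproblematic: $H(ST)$ is a finite-dimensional $F$-subalgebra of $L$ (embed it in $ST$ via $x\mapsto xu_0$), hence an integral domain that is a field, so the theorem does reduce to showing that $H(ST)=F$ forces $\dim ST\ge\dim S+\dim T-1$. The gap is that everything after this reduction is asserted rather than proved, and the assertion sits exactly on the point that has historically blocked this route. First, a misdiagnosis: for the standard linear $e$-transform $(S,T)\mapsto(S+aT,\ a^{-1}S\cap T)$ one has $(S+aT)(a^{-1}S\cap T)\subseteq ST$ automatically, since $S(a^{-1}S\cap T)\subseteq ST$ and $(aT)(a^{-1}S)\subseteq ST$; there are no ``spurious terms'' for saturation to absorb, so the transform is already ``legal'' and saturation is not the missing ingredient. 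The genuine difficulty is the terminal case of the recursion: when for every admissible $a$ the transform fails to make progress (i.e.\ $a^{-1}S\cap T$ is $0$ or all of $T$), one must manufacture a nontrivial element of the stabilizer. That is precisely where Hou, Leung and Xiang needed separability (their argument hinges on properties available only for separable extensions, e.g.\ the finiteness of the set of intermediate fields), and where Beck--Lecouvey's later extension also breaks down when infinitely many subalgebras are present. Your sketch offers no mechanism for this step, so it restates the problem rather than solving it. A further unaddressed issue: the transform shrinks the product, $S'T'\subseteq ST$, so a stabilizer of $S'T'$ extracted from the inductive hypothesis does not obviously stabilize $ST$; even in the additive group case this bookkeeping is a substantial part of Kneser's proof.

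By contrast, the paper abandons the $e$-transform entirely. It proves the stronger Theorem~\ref{thm:kneser_onesided} (from which Theorem~\ref{thm:kneser} follows with no separability hypothesis) by transposing Balandraud's isoperimetric/atomic method: the key objects are the boundary $\partial_S X=\dim(XS/X)$ with its submodularity (Proposition~\ref{prop:subm}), and --- the genuinely new tool with no additive counterpart --- duality of subspaces with respect to a Frobenius bilinear form $\scalprod{x}{y}=\sigma(xy)$, which gives $\partial X^*\le\partial X$ and lets one control sums $X+Y$ via their duals. The structure theory of kernels (Theorem~\ref{thm:linbal}) then produces the stabilizing subfield. If you want to salvage your plan, you would either have to reintroduce separability at the terminal step (recovering only the original Hou--Leung--Xiang statement) or supply a substitute for it, which is what the paper's duality argument accomplishes.
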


The conclusion of the Theorem of Hou et al. given in \cite{hou2002} is
that we have
$$
\dim ST\ge \dim S+\dim T-\dim H(ST),
$$
where $H(ST)=\{x\in L: xST\subset ST\}$ denotes the stabilizer of $ST$ in
$L$.
As is explained in \cite{hou2002}, the above formulation is easily
seen to be equivalent to the conclusion of Theorem~\ref{thm:kneser}. 
Hou et al.'s Theorem \ref{thm:kneser} can also
be seen as a generalisation of the original additive Theorem of Kneser since
the latter can be recovered from the former.

Theorem \ref{thm:kneser} was initially motivated by a problem on
difference sets \cite{hou2002}, but has since become part of a wider effort
to transpose some classical theorems of additive combinatorics to a
linear algebra or extension field setting. In particular Eliahou and
Lecouvey \cite{eliahou2009} obtained linear analogues of some
classical additive theorems including theorems of Olson \cite{olson}
and Kemperman \cite{kemperman} in nonabelian groups. Lecouvey
\cite{lecouvey2011} pursued this direction by obtaining, among other
extensions,  linear versions of the Pl\"unecke--Ruzsa \cite{ruzsa}
inequalities. The present authors recently derived a linear analogue of
Vosper's Theorem in \cite{bachoc}.
Somewhat more generally, additive combinatorics have had some
spectacular successes by lifting purely additive problems into various
algebras where the additional structure has provided the key to the
original problems, e.g. \cite{DSH94,karolyi}. This provides in part additional
motivation for linear extensions of classical addition theorems.

Going back to Hou et al.'s Theorem~\ref{thm:kneser}, a natural question is
whether the separability assumption in Theorem~\ref{thm:kneser} is
actually necessary. Hou makes an attempt in \cite{hou2007} to work at
Theorem~\ref{thm:kneser} without the separability assumption, but only
manages a partial result where the involved spaces are assumed to have
small dimension. Hou goes on to conjecture \cite{hou2007} that 
Theorem~\ref{thm:kneser} always holds, i.e., holds without the
separability assumption. 
Recently, Beck and Lecouvey~\cite{bl15} extended
Theorem~\ref{thm:kneser} to algebras other than a field extension over
$F$, but again, their approach
breaks down when the algebra contains an infinity of subalgebras, so that the
case of non-separable field extensions is not covered.

In the present work, we prove Hou's conjecture and remove the
separability assumption in Theorem~\ref{thm:kneser}. We actually prove the
stronger statement below.

\begin{theorem}\label{thm:kneser_onesided}
 Let $L/F$ be a field extension, and let $S\subset L$ be an
 $F$-subspace of $L$ of finite positive dimension. Then 
 \begin{itemize}
 \item either for every finite dimensional subspace $T$ of $L$ we have
  $$\dim ST\geq\dim S+\dim T-1,$$
 \item or there exists a subfield
 $K$ of $L$, $F\subsetneq K\subset L$, such that for every
 finite-dimensional subspace
 $T$ of $L$ satisfying
  $$\dim ST < \dim S+\dim T-1,$$
 we have $STK=ST$.
 \end{itemize}
\end{theorem}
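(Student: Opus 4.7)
The approach I would take is to transpose Balandraud's isoperimetric refinement of Kneser's theorem from the group setting to the field extension setting. Fix $S$ and assume at least one \emph{bad} subspace exists---a finite-dimensional $F$-subspace $T$ with $\dim ST < \dim S + \dim T - 1$---since otherwise the first alternative of the theorem already holds. The plan is to single out a distinguished extremal bad subspace $T^{*}$ and then set $K := H(ST^{*})$, where $H(U) := \{x \in L : xU \subseteq U\}$ denotes the stabilizer of a subspace $U$ of $L$. A natural extremal choice for $T^{*}$ is one minimizing $\dim ST$ over all bad $T$, and among those, one of smallest dimension.

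The first task is to check that $K$ is a subfield of $L$ strictly containing $F$. For any finite-dimensional subspace $U$, the stabilizer $H(U)$ is automatically an $F$-subalgebra of $L$, and being a finite-dimensional subring of the field $L$ it is itself a subfield---this step is \emph{independent of separability}, which is exactly what unlocks the non-separable case. To see that $K$ strictly contains $F$, one invokes a linear Dyson-type transform
\[
(S, T^{*}) \longmapsto \bigl(S + aT^{*},\ T^{*} \cap a^{-1}S\bigr), \qquad a \in L^{\times},
\]
which preserves $\dim S + \dim T^{*}$ and satisfies $(S + aT^{*})(T^{*} \cap a^{-1}S) \subseteq ST^{*}$. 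Combining the bad inequality for $T^{*}$ with the extremality of $T^{*}$, one should be able to force the transform to be trivial for $a$ outside $K$, producing a nontrivial element in $K \setminus F$.

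The crux of the proof is the universal nature of $K$: the same subfield must stabilize $ST$ for \emph{every} bad $T$, not only $T^{*}$. Given another bad $T$, I would compare it with $T^{*}$ through the submodular relation
\[
\dim S(T + T^{*}) + \dim S(T \cap T^{*}) \le \dim ST + \dim ST^{*}
\]
coupled with the linear identity $\dim(T + T^{*}) + \dim(T \cap T^{*}) = \dim T + \dim T^{*}$. Subtracting these yields
\[
\bigl[\dim S(T{+}T^{*}) - \dim S - \dim(T{+}T^{*}) + 1\bigr] + \bigl[\dim S(T{\cap}T^{*}) - \dim S - \dim(T{\cap}T^{*}) + 1\bigr] < 0,
\]
forcing at least one of $T + T^{*}$ and $T \cap T^{*}$ to be bad whenever both $T$ and $T^{*}$ are. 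The minimality of $T^{*}$ then severely restricts the possible intersection patterns, and pushing these constraints together with the fact that $K \cdot ST^{*} = ST^{*}$ transfers the stabilization property to $K \cdot ST = ST$ for every bad $T$.

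The main obstacle is precisely this last universal step, where the extremality of $T^{*}$ must be played against every competing bad $T$; this is exactly where Balandraud's isoperimetric method, rather than the classical proof scheme of Hou, Leung and Xiang, is essential. The payoff is that every ingredient---stabilizers being subfields, the linear Dyson transform, and the submodular dimension inequality---is either algebraic or combinatorial and never invokes the trace map or any tool sensitive to separability. Hence Hou's conjecture drops out as a corollary of the strengthened one-sided statement, resolving the non-separable case in one stroke.
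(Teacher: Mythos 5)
Your outline correctly identifies two of the paper's ingredients---that stabilizers of finite-dimensional subspaces are subfields regardless of separability, and the submodular inequality $\dim S(T+T^*)+\dim S(T\cap T^*)\le \dim ST+\dim ST^*$---but both load-bearing steps are asserted rather than proved, and the first is proposed via exactly the tool that fails in the non-separable case. The linear Dyson/$e$-transform $(S,T^*)\mapsto(S+aT^*,\,T^*\cap a^{-1}S)$ is the linear $e$-transform of Hou, Leung and Xiang; the reason their theorem carries a separability hypothesis is that the descent driven by this transform terminates only when there are finitely many intermediate fields between $F$ and the relevant subextension (this is also why Beck--Lecouvey's extension breaks down for algebras with infinitely many subalgebras), and non-separable extensions such as $\F_p(u^{1/p},v^{1/p})/\F_p(u,v)$ have infinitely many. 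You give no argument for ``forcing the transform to be trivial for $a$ outside $K$,'' and that is precisely where the known obstruction sits. The paper abandons the transform entirely and instead builds on the boundary operator $\partial_S X=\dim XS/X$, a duality $X^*=(XS)^\perp$ coming from a Frobenius bilinear form, and the notion of saturated subspaces.

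The ``universal'' step is the heart of the theorem and cannot be extracted from a single extremal $T^*$ plus one application of submodularity. For one thing, when $T\cap T^*=\{0\}$ your displayed inequality is vacuous (the zero subspace has defect $1-\dim S<0$), so nothing is learned about $T$; arranging nonzero intersections and keeping the sum away from $L$ after suitable translations is a recurring technical point in the paper (Lemma~\ref{lem:sum}). More fundamentally, the correct extremal objects are not a $T^*$ minimizing $\dim ST$ but the \emph{kernels}: for each achievable value $\lambda_i$ of $\partial_S$ on saturated subspaces, a saturated subspace of minimal dimension with that boundary. Theorem~\ref{thm:linbal} shows these form a nested chain of subfields $F_1\supset\cdots\supset F_n=F$ and that every bad $T$ (via its saturation $\tilde T$) is stabilized by the appropriate $F_k$; the single universal field is then $F_{n-1}$, the smallest nontrivial member of the chain, not the stabilizer of one distinguished $ST^*$. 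Establishing the chain and the stabilization of arbitrary cells (Propositions~\ref{claim5}--\ref{claim7}) requires the duality $X\mapsto X^*$, and crucially the freedom to vary the linear form defining it (see the remark after Proposition~\ref{claim6}), which has no counterpart in your sketch. As written, the proposal reproduces the shape of Balandraud's strategy but omits the machinery that makes it work in the field setting, so there is a genuine gap at both of its key steps.
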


Besides the removal of the separability condition, the additional
strength of 
Theorem \ref{thm:kneser_onesided} with respect to Theorem
\ref{thm:kneser} lies in the fact that the subfield $K$ that
stabilises $ST$
 seems to depend on both spaces $S$ and $T$ in Theorem~\ref{thm:kneser}  but actually can be seen to depend only on one of the factors in Theorem~\ref{thm:kneser_onesided}.
Theorem~\ref{thm:kneser_onesided} is a transposition to the extension
field setting of a theorem of Balandraud \cite{balandraud} which is a
similarly stronger form of Kneser's Addition Theorem and can
be stated as:

\begin{theorem}\label{thm:balandraud}
 Let $G$ be an abelian group, and let $S\subset G$ be a finite subset of
 elements of $G$. Then
 \begin{itemize}
 \item Either for every finite subset $T$ of $G$ we have
  $$|S+T|\geq |S|+|T|-1,$$
 \item or there exists a subgroup $H\neq\{0\}$ of $G$ such that, for every
   finite subset $T$ of $G$ satisfying
 $$|S+T| < |S|+|T|-1,$$
 we have $S+T+H=S+T$.
 \end{itemize}
\end{theorem}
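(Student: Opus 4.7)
My plan is to transpose Balandraud's argument in the integer setting: extract a stabilizer from an extremal violator of the bound $|S+T|\ge|S|+|T|-1$ and then show this same stabilizer stabilizes every violator. Let $\mathcal{F}$ denote the collection of finite non-empty $T\subset G$ with $|S+T|<|S|+|T|-1$, and set $f(T):=|S+T|-|T|$. If $\mathcal{F}=\emptyset$ the first alternative holds. Otherwise, since $f$ is non-negative and integer-valued, pick $T_0\in\mathcal{F}$ minimising $f$. Applying Theorem~\ref{thm:knesergroup} to $(S,T_0)$ yields a non-trivial subgroup $H\le G$ with $H+(S+T_0)=S+T_0$, and replacing $T_0$ by $T_0+H$ can only enlarge $|T_0|$ without changing $|S+T_0|$, so by minimality of $f(T_0)$ we may further assume $T_0=T_0+H$. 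This $H$ will be the stabilizer promised by the theorem.

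The key technical input is the submodularity inequality
$$f(A\cup B)+f(A\cap B)\le f(A)+f(B)\qquad (A\cap B\neq\emptyset),$$
which follows from $S+(A\cup B)=(S+A)\cup(S+B)$, $S+(A\cap B)\subseteq (S+A)\cap(S+B)$, and $|A\cup B|+|A\cap B|=|A|+|B|$. Consequently, for any $T\in\mathcal{F}$ and any shift $c$ with $(T_0+c)\cap T\neq\emptyset$, the minimality of $f(T_0)$ gives $f(T\cap(T_0+c))\ge f(T_0)$, hence $f(T\cup(T_0+c))\le f(T)<|S|-1$, so $T\cup(T_0+c)\in\mathcal{F}$ as well.

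The main claim is then that $S+T+H=S+T$ for every $T\in\mathcal{F}$, and I would argue by contradiction. If for some $T\in\mathcal{F}$ the sumset $S+T$ fails to be $H$-periodic, I would iterate the union-with-translate construction above, taking unions of $T$ with translates $T_0+c_i$ until the resulting sumset becomes $H$-periodic (each $T_0+c_i$ is $H$-periodic because $T_0$ is, and $H(A\cup B)\supseteq H(A)\cap H(B)$). This produces a set $T^\ast\in\mathcal{F}$ with $H$-periodic sumset, and an appropriate strict descent on an auxiliary invariant---for instance the number of $H$-cosets meeting but not contained in $S+T$---eventually contradicts the minimality of $f(T_0)$. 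The principal obstacle is precisely this extremal step: submodularity alone produces a new element of $\mathcal{F}$, and making the descent strict whenever $S+T$ fails to be $H$-periodic requires careful bookkeeping over $H$-cosets; this is the combinatorial core of Balandraud's original argument.
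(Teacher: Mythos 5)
There is a genuine gap, and beyond the step you yourself flag as open, the architecture of the argument cannot work as set up: the subgroup you extract is in general the wrong one. A first, minor, issue is that when $G$ is finite you must exclude $T$ with $S+T=G$ from the minimisation, since $f(G)=0$ makes $T_0=G$ and $H=G$ a degenerate minimiser. The serious issue is that even after this fix, the subgroup obtained from a global minimiser $T_0$ of $f$ sits at the \emph{top} of Balandraud's chain of stabilizers, whereas the $H$ of Theorem~\ref{thm:balandraud} must be the \emph{smallest} non-trivial subgroup of that chain: sets $T$ with larger (but still admissible) values of $f(T)$ have sumsets stabilized only by strictly smaller subgroups, and the additive analogue of Lemma~\ref{claim1} shows they are genuinely \emph{not} stabilized by the kernel of an earlier level. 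Concretely, in $G=\Z/27\Z$ put $K_1=3G$, $K_2=9G$ and $S=K_1\cup(1+K_2)\cup\{4,13\}$, so $|S|=14$. Then $f(K_1)=9$ and $f(K_2)=12$ are both below $|S|-1=13$, the minimiser of $f$ among $T$ with $S+T\neq G$ is a coset of $K_1$ with $H(S+T_0)=K_1$, yet $S+K_2$ has $15$ elements and so cannot be $K_1$-periodic. Thus your main claim, read with $H=H(S+T_0)$, is false; the correct $H$ here is $K_2$, and nothing in your construction identifies it. (Kneser's theorem only guarantees \emph{some} non-trivial stabilizing subgroup of $S+T_0$, which gives you even less control.)

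Second, the step you describe as ``the combinatorial core'' is indeed the entire proof and is absent. Your union-with-translates device does produce new members of $\mathcal{F}$ whose sumsets are $H$-periodic, but no contradiction with the minimality of $f(T_0)$ follows from this alone, and no strict descent is exhibited. What is actually needed is the full hierarchy: one kernel $F_i$ (a minimiser of cardinality, not of $f$) for \emph{each} achievable value $\lambda_i$ of $f$ below $|S|-1$, the nesting $F_1\supset F_2\supset\cdots\supset F_n$, and the fact that every $i$-cell is stabilized by $F_i$ — this is the content of Theorem~\ref{thm:linbal} and the induction of Propositions~\ref{claim5}, \ref{claim6} and \ref{claim7} in the linear setting, and of Balandraud's original paper in the additive one; the theorem's $H$ is then the smallest non-trivial kernel. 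Note finally that the paper does not prove Theorem~\ref{thm:balandraud} directly: it cites \cite{balandraud} and observes that the statement also follows from Theorem~\ref{thm:kneser_onesided} by the Galois-descent argument of \cite[Section 3]{hou2002}, so a correct direct combinatorial proof would be a legitimately different route — but the present sketch does not supply one.
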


Balandraud proved his theorem through an in-depth study of a
phenom\-enon that he called saturation. For a given set $S$, a set $T$
is saturated with respect to $S$ if there does not exist a set $T'$
strictly containing $T$ such that the sums $S+T$ and $S+T'$ are equal.
He showed that when $T$ is a subset of smallest cardinality among all
saturated subsets for which the quantity $|S+T|-|T|$ is a given
constant, then $T$ must be a coset of some subgroup of
$G$. Furthermore, the subgroups that appear in this way form a chain
of nested subgroups, and the smallest non-trivial subgroup of this
chain is the subgroup $H$ of Theorem~\ref{thm:balandraud}.

The techniques used by Balandraud are very combinatorial in nature and are inspired by Hamidoune's isoperimetric (or
atomic) method in additive combinatorics \cite{hamidoune,YOH13}.
In the present paper we prove Theorem~\ref{thm:kneser_onesided}
by exporting Balandraud's approach to the extension field setting.
This can also be seen as a follow-up to the linear isoperimetric
method initiated in Section~3 of~\cite{bachoc}.
We note that this strategy deviates significantly from Hou et al.'s
approach in \cite{hou2002} which relied on a linear variant of the
additive $e$-transform and required crucially the separability of
the field extensions.

We also note that Theorem~\ref{thm:kneser_onesided} can be seen
as a generalisation of Balandraud's Theorem~\ref{thm:balandraud} in
groups, since the latter may be derived from the former by exactly the
same Galois group argument as that of \cite[Section 3]{hou2002}.

The paper is organised as follows.
Section~\ref{sec:con} is devoted to setting up some basic tools and
deriving the combinatorics of saturation.
Section~\ref{sec:main} introduces {\em kernels}, which are finite
dimensional subspaces of minimum dimension among finite-dimensional
subspaces $T$ for which $\dim ST - \dim T$ is a fixed integer less than
$\dim S -1$. A structural theory of kernels is derived, whose core
features are collected in Theorem~\ref{thm:linbal}. Its proof is
broken up into intermediate results through
Propositions \ref{claim5}, \ref{claim6} and \ref{claim7}.
Finally, Section~\ref{sec:proofmain} derives the proof of 
Theorem~\ref{thm:kneser_onesided} from the structure of kernels and
concludes the paper.

\section{Preliminary definitions and properties}\label{sec:con}

We assume that $L/F$ is a field extension and that $S$ is a
finite-dimensional $F$-subspace of $L$ such that $1\in S$.
We suppose furthermore that $F(S)=L$, where
$F(S)$ denotes the subfield of $L$ generated by $S$.

\subsection{Boundary operator and submodularity} 
For every  subspace $X$ of $L$  we define
$$
\partial_S X=\dim\, \quot{XS}{X}
$$
the increment of dimension of $X$ when multiplied by $S$. 
We omit the
subscript $S$ in $\partial_S$ whenever $S$ is clear from the
context. 
Note that we may have $\partial{X}=\infty$
and that when $X$ and $S$ are finite-dimensional $\partial{X}=\dim XS -\dim X$.
The essential property of the ``boundary'' operator
$\partial$
is the submodularity relation:

\begin{prop}\label{prop:subm} Let $X, Y$ be  subspaces of $L$. We have
$$
\partial (X+Y)+\partial (X\cap Y)\le \partial X +\partial Y.
$$
\end{prop}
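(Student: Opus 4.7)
The plan is to reduce the submodularity of $\partial$ to the usual dimension formula $\dim(A+B)+\dim(A\cap B)=\dim A+\dim B$ via two elementary observations about multiplication by $S$. The first is the identity $(X+Y)S=XS+YS$, which follows at once from distributivity. The second is the inclusion $(X\cap Y)S\subseteq XS\cap YS$, which is immediate from $X\cap Y\subseteq X,Y$. These are the only facts about the extension $L/F$ that the proof will use; everything else is linear algebra.

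Assuming first that all subspaces in sight are finite-dimensional, I would apply the dimension formula twice: once to $X,Y$, giving $\dim(X+Y)+\dim(X\cap Y)=\dim X+\dim Y$, and once to $XS,YS$, giving $\dim(XS+YS)+\dim(XS\cap YS)=\dim XS+\dim YS$. Combining these with the two observations above yields
$$
\dim\bigl((X+Y)S\bigr)+\dim\bigl((X\cap Y)S\bigr)\;\leq\;\dim XS+\dim YS,
$$
and subtracting the first identity gives exactly $\partial(X+Y)+\partial(X\cap Y)\leq \partial X+\partial Y$.

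To cover the case where $X$ or $Y$ is not finite-dimensional, I would recast the argument structurally by introducing the $F$-linear map
$$
\psi\colon\frac{XS}{X}\oplus\frac{YS}{Y}\longrightarrow\frac{XS+YS}{X+Y},\qquad (u+X,v+Y)\longmapsto u+v+(X+Y).
$$
Well-definedness is clear, and surjectivity follows from $(X+Y)S=XS+YS$. A short computation, parametrising an element of $\ker\psi$ as $(w+X,-w+Y)$ with $w\in XS\cap YS$, identifies $\ker\psi$ with $(XS\cap YS)/(X\cap Y)$. Combining this isomorphism with the inclusion $(X\cap Y)S\subseteq XS\cap YS$ yields the inequality in full generality (interpreting $+\infty$ in the obvious way). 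The only real bookkeeping is the kernel computation; there is no serious conceptual obstacle, the content of the proposition lying entirely in the two observations about multiplication by $S$ recorded at the outset.
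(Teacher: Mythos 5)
Your proof is correct and follows essentially the same route as the paper: the map $\psi$ you construct, together with your identification of $\ker\psi$ with $(XS\cap YS)/(X\cap Y)$, is precisely the exact sequence $0\to \quot{(A'\cap B')}{(A\cap B)}\to \quot{A'}{A}\times\quot{B'}{B}\to \quot{(A'+B')}{(A+B)}\to 0$ that the paper derives from the snake lemma (Lemma~\ref{lem:snake} and Corollary~\ref{cor:snake}), after which both arguments finish with the same two inclusions $(X+Y)S\subseteq XS+YS$ and $(X\cap Y)S\subseteq XS\cap YS$. The only difference is that you verify the exact sequence by hand rather than invoking the snake lemma.
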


A short proof of Proposition~\ref{prop:subm} is given in \cite{bachoc}
when $L$ is finite-dimensional over $F$. In the general case, we invoke
the following Lemma:

\begin{lem}\label{lem:snake}
  Let $A,B,A'$ and $B'$ be subspaces of some ambient vector space $E$ such
  that $A\subset A'$ and $B\subset B'$. There is an exact sequence of
  vector spaces
$$0\rightarrow \quot{(A'\cap B')}{(A\cap B)} \rightarrow
\quot{A'}{A}\times \quot{B'}{B} \rightarrow \quot{(A'+B')}{(A+B)} \rightarrow 0.$$
\end{lem}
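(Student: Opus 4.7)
The plan is to construct explicit linear maps realising the sequence and verify exactness by an elementary diagram chase. I would define
$$\phi \colon (A' \cap B')/(A \cap B) \longrightarrow A'/A \times B'/B,\qquad \phi(\bar x) = (x + A,\ x + B),$$
and
$$\psi \colon A'/A \times B'/B \longrightarrow (A'+B')/(A+B),\qquad \psi(\overline{a'},\overline{b'}) = (a'-b') + (A+B).$$
Both maps are $F$-linear and well defined, since $A \cap B$ is contained in each of $A$ and $B$, and each of $A$ and $B$ is contained in $A+B$.

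Next I would check the four exactness conditions in turn. Injectivity of $\phi$ is immediate: if $x \in A' \cap B'$ satisfies $x \in A$ and $x \in B$, then $x \in A \cap B$. Surjectivity of $\psi$ is equally direct, since any generator $a' + b' + (A+B)$ of the target is the image of $(\overline{a'},\overline{-b'})$. That the composition $\psi \circ \phi$ vanishes is read off from $\psi(\phi(\bar x)) = (x - x) + (A+B) = 0$, which is precisely why the right-hand map is defined using the difference $a' - b'$ rather than the sum.

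The one step with any content is showing $\ker \psi \subset \operatorname{Im} \phi$. Given a class $(\overline{a'},\overline{b'})$ with $a' - b' \in A + B$, I would write $a' - b' = a + b$ with $a \in A$ and $b \in B$, and then set $x := a' - a = b' + b$. The first expression shows $x \in A'$, and the second shows $x \in B'$, so $x \in A' \cap B'$; moreover $\phi(\bar x) = (x + A,\, x + B) = (a' + A,\, b' + B)$ since $a \in A$ and $b \in B$. This exhibits the required preimage.

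I expect no serious obstacle: the statement is essentially the short Mayer--Vietoris sequence for a pair of subspaces of a fixed ambient vector space, and the verification is a routine exercise in quotient linear algebra. The only subtle point, already resolved above, is the choice of sign convention in $\psi$ so that $\psi \circ \phi = 0$ in arbitrary characteristic.
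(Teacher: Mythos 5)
Your proof is correct: the maps $\phi$ and $\psi$ are well defined and linear, injectivity of $\phi$, surjectivity of $\psi$, $\psi\circ\phi=0$, and $\ker\psi\subset\operatorname{Im}\phi$ are all verified accurately, and the key step (writing $a'-b'=a+b$ and setting $x=a'-a=b'+b$) is exactly the right move. The route is genuinely different from the paper's, however. The paper does not chase elements at all: it identifies $\quot{(A\times B)}{(A\cap B)}$ with $A+B$ (and likewise for the primed spaces), assembles a commutative diagram with exact rows whose vertical maps are the inclusions $A\cap B\hookrightarrow A'\cap B'$, $A\times B\hookrightarrow A'\times B'$, and the induced map $A+B\to A'+B'$, and then invokes the snake lemma to read off the exact sequence of cokernels. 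What the paper's approach buys is brevity and the automatic well-definedness of all maps; what your approach buys is complete self-containment (no appeal to Lang) and explicit formulas for the maps, at the modest cost of having to check well-definedness and the sign convention by hand. Either proof is perfectly acceptable for the way the lemma is used, namely to deduce Corollary~\ref{cor:snake} and the submodularity of $\partial$.
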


\begin{proof}
We may identify the subspace $A\cap B$ with the subspace of $A\times
B$ consisting of the elements $(x,-x)$, $x\in A\cap B$. With the
similar identification for $A'\cap B '$, we get the isomorphisms
\begin{equation}
  \label{eq:A+B,A'+B'}
  \quot{(A\times B)}{(A\cap B)} \xrightarrow{\sim} A+B\quad\text{and}\quad
\quot{(A'\times B')}{(A'\cap B')} \xrightarrow{\sim} A'+B'
\end{equation}
and the following commutative diagram with the rows being
exact and $\gamma$ corresponding to the natural mapping of $A+B$ into $A'+B'$.
  $$\begin{CD}
  @.  A\cap B   @>>> A\times B   @>>> \quot{(A\times B)}{(A\cap B)}@>>> 0\\
  @.  @VV{\alpha}V           @VV{\beta}V             @VV{\gamma}V                             @.\\
  0 @>>>  A'\cap B' @>>> A'\times B' @>>> \quot{(A'\times B')}{(A'\cap B')}@.
  \end{CD}
  $$
The snake lemma (Lang, \cite[Ch 3, Section 9]{lang}) therefore gives the exact sequence
$$0\rightarrow\coker\alpha \rightarrow\coker\beta
\rightarrow\coker\gamma \rightarrow 0,$$
which yields the result after identification of $\quot{A'\times
  B'}{A\times B}$ with $\quot{A'}{A}\times \quot{B'}{B}$ and
the identifications \eqref{eq:A+B,A'+B'}. 
\end{proof}

Lemma~\ref{lem:snake} immediately gives:
\begin{cor}\label{cor:snake}
 If $A'/A$ and $B'/B$ have finite dimension, then
 $$\dim\,\quot{(A'+B')}{(A+B)} = \dim\,\quot{A'}{A} +
 \dim\,\quot{B'}{B} - \dim\, \quot{(A'\cap B')}{(A\cap B)}.$$
\end{cor}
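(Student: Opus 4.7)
The plan is to deduce Corollary \ref{cor:snake} as an immediate consequence of Lemma \ref{lem:snake} by invoking the standard additivity of dimension along a short exact sequence of vector spaces. Recall that for any short exact sequence $0 \to U \to V \to W \to 0$ of $F$-vector spaces, one has $\dim V = \dim U + \dim W$, with the convention that if $V$ is finite-dimensional then $U$ and $W$ are as well (since $U$ injects into $V$ and $W$ is a quotient of $V$).

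Applying this principle to the exact sequence
$$0\rightarrow \quot{(A'\cap B')}{(A\cap B)} \rightarrow \quot{A'}{A}\times \quot{B'}{B} \rightarrow \quot{(A'+B')}{(A+B)} \rightarrow 0$$
provided by Lemma \ref{lem:snake}, and using the hypothesis that $A'/A$ and $B'/B$ have finite dimension, the middle term is finite-dimensional with dimension $\dim(A'/A) + \dim(B'/B)$. Hence both outer terms are finite-dimensional, and additivity yields
$$\dim\,\quot{A'}{A} + \dim\,\quot{B'}{B} = \dim\,\quot{(A'\cap B')}{(A\cap B)} + \dim\,\quot{(A'+B')}{(A+B)}.$$
Rearranging this identity gives exactly the desired formula. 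There is no real obstacle here; the only step to verify carefully is the identification of the dimension of $\quot{A'}{A}\times \quot{B'}{B}$ with $\dim(A'/A)+\dim(B'/B)$, which is the standard dimension formula for a direct product of vector spaces.
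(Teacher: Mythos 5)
Your proof is correct and follows exactly the route the paper intends: the corollary is stated as an immediate consequence of Lemma~\ref{lem:snake}, obtained by applying additivity of dimension along the short exact sequence, with finite-dimensionality of the outer terms deduced from that of the middle term. Nothing to add.
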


\noindent
{\it Proof of Proposition~\ref{prop:subm}}.
  If $\partial{X}$ or $\partial{Y}$ is $\infty$, there is nothing to
  prove, so we may set $X'=XS$ and $Y'=YS$ and suppose that $X'/X$ and
  $Y'/Y$ are finite-dimensional. We have $(X+Y)S\subset X'+Y'$ and
  $(X\cap Y)S\subset X'\cap Y'$, whence
   $$\partial{(X+Y)}+\partial{(X\cap Y)}\leq \dim\,\quot{(X'+Y')}{(X+Y)} +\dim\,\quot{(X'\cap Y')}{(X\cap Y)},$$
and the conclusion follows from Corollary~\ref{cor:snake}.

\subsection{Duality} 
Recall that every non-zero linear form $\sigma:L\to F$ induces a  nondegenerate  symmetric bilinear (Frobenius) form defined as $\scalprod{x}{y}_{\sigma}=\sigma(xy)$, with  the property:
\begin{equation}
  \label{eq:frobenius}
  \scalprod{xy}{z}_{\sigma}=\scalprod{x}{yz}_{\sigma} \; \mbox{ for all } x,y,z\in L.
\end{equation}
Fix such a bilinear form $(\cdot|\cdot)$. For a subspace $X$, we set
$$
X^\perp = \{y\in L:\; \forall x\in X, \scalprod{x}{y}=0\}.
$$
We call the {\it dual subspace} of  the subspace $X$ the subspace
$$
X^*=(XS)^\perp.
$$

We will use the notation $X^{**} =(X^*)^*$ and $X^{***} =(X^{**})^*=(X^*)^{**}$.

We shall require the following lemma which is a straightforward
consequence of Bourbaki \cite[Ch. 9, \S 1, n. 6,
  Proposition 4]{bourbaki}:
  \begin{lem}\label{lem:bourbaki}
    If $A$ and $B$ are subspaces such that 
    the quotient $\quot{A}{(B^\perp\cap A)}$ is finite
  dimensional, then $\dim\quot{A}{(B^\perp\cap A)} = \dim
  \quot{B}{(A^\perp\cap B)}$. 
  \end{lem}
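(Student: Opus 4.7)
The plan is to exploit the non-degeneracy of the Frobenius form $(\cdot\mid\cdot)$ to construct a non-degenerate bilinear pairing between the two quotients $A/(B^\perp\cap A)$ and $B/(A^\perp\cap B)$, and then invoke the elementary fact that in such a pairing, finite-dimensionality on one side forces equal finite-dimensionality on the other.

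First I would restrict $(\cdot\mid\cdot)$ to a bilinear map $\beta:A\times B\to F$. For $x\in A$, the linear form $y\mapsto\beta(x,y)$ on $B$ vanishes exactly when $x\in B^\perp$, so on $A$ its kernel is $B^\perp\cap A$; symmetrically, for $y\in B$ the linear form $x\mapsto\beta(x,y)$ on $A$ has kernel $A^\perp\cap B$. Therefore $\beta$ descends to a bilinear pairing
$$\bar\beta:\bigl(A/(B^\perp\cap A)\bigr)\times\bigl(B/(A^\perp\cap B)\bigr)\longrightarrow F$$
which is non-degenerate on both sides by construction.

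The key step is the following standard fact (which is the content of the cited Bourbaki proposition): if $V\times W\to F$ is a non-degenerate bilinear pairing and $\dim V=n<\infty$, then $\dim W=n$ as well. I would prove it by observing that the pairing induces an injective linear map $W\hookrightarrow V^*$ (injectivity is non-degeneracy on the $W$ side), whence $\dim W\le\dim V^*=n$; in particular $W$ is finite-dimensional, and the symmetric injection $V\hookrightarrow W^*$ then gives $n\le\dim W$, yielding equality. Applied with $V=A/(B^\perp\cap A)$ and $W=B/(A^\perp\cap B)$, this gives exactly the lemma.

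The main potential obstacle is purely notational: one must be careful that the form $\beta$ really descends to the claimed quotients, which amounts to unraveling the definitions of $A^\perp$ and $B^\perp$; there is no analytic subtlety because we only need a non-degenerate pairing of abstract $F$-vector spaces and the given ambient non-degeneracy of $(\cdot\mid\cdot)$ on $L$ is used solely to identify the kernels of $\beta$ with $B^\perp\cap A$ and $A^\perp\cap B$.
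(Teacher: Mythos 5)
Your proof is correct, and it is essentially the paper's own argument: the paper simply cites Bourbaki (Ch.~9, \S 1, n.~6, Proposition 4), and what you have written out --- descending the Frobenius form to a non-degenerate pairing of the two quotients and using the mutual injections into duals --- is precisely the standard proof of that cited proposition. Nothing is missing; in particular you correctly use finite-dimensionality of only one quotient to start the dimension comparison.
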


The following elementary properties hold for subspaces and their duals:

\begin{lem}\label{lem:dual} For every  $F$-subspace $X$ of $L$, we have
\begin{enumerate}
\item[\rm{(i)}] $X\subset X^{**}$.
\item[\rm{(ii)}] $X^*=X^{***}$
\item[\rm{(iii)}] $\partial X^*\le \partial X$.
\end{enumerate}
\end{lem}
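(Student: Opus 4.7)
The plan is to deduce the three properties in turn, each essentially a one-line argument: (i) and (ii) will follow directly from the Frobenius relation \eqref{eq:frobenius}, while (iii) will be obtained by a careful application of Lemma~\ref{lem:bourbaki}. Throughout, I would use silently the standing assumption $1\in S$, which gives $Y\subset YS$ for every subspace $Y$.

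For (i), I would pick $x\in X$ and an arbitrary generator $zs$ of $X^*S$ (with $z\in X^*$, $s\in S$). The Frobenius relation gives $\scalprod{x}{zs}=\scalprod{xs}{z}$, which vanishes since $xs\in XS$ and $z\in X^*=(XS)^\perp$. Hence $x\in (X^*S)^\perp=X^{**}$. For (ii), applying (i) to $X^*$ in place of $X$ yields $X^*\subset X^{***}$, while (i) applied to $X$ itself gives $X\subset X^{**}$, whence $XS\subset X^{**}S$, and taking orthogonals $X^{***}=(X^{**}S)^\perp\subset (XS)^\perp=X^*$.

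For (iii), if $\partial X=\infty$ the inequality is automatic, so I may assume $\partial X$ finite. I would apply Lemma~\ref{lem:bourbaki} with $A=XS$ and $B=X^*S$. Unpacking the definitions, $A^\perp\cap B=(XS)^\perp\cap X^*S=X^*\cap X^*S=X^*$ and $B^\perp\cap A=(X^*S)^\perp\cap XS=X^{**}\cap XS$. By (i), $X\subset X^{**}\cap XS$, so $XS/(X^{**}\cap XS)$ is a quotient of $XS/X$, hence finite-dimensional of dimension at most $\partial X$. Lemma~\ref{lem:bourbaki} then yields
$$\partial X^*=\dim\,\quot{X^*S}{X^*}=\dim\,\quot{XS}{X^{**}\cap XS}\le\dim\,\quot{XS}{X}=\partial X.$$
The main (quite minor) obstacle I anticipate is choosing the pair $(A,B)$ so that the finite-dimensionality hypothesis of Lemma~\ref{lem:bourbaki} is actually available; the inclusion $X\subset X^{**}\cap XS$, supplied by (i), is precisely what makes this possible.
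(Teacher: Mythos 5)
Your proposal is correct and follows essentially the same route as the paper: (i) via the Frobenius relation, (ii) via (i) plus the anti-monotonicity of $*$, and (iii) via Lemma~\ref{lem:bourbaki} applied to the pair $(XS, X^*S)$, with the inclusion $X\subset X^{**}\cap XS$ supplying the finite-dimensionality hypothesis. The only difference is the (immaterial) order of the two spaces in the application of Lemma~\ref{lem:bourbaki}.
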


\begin{proof}
\hspace{1cm}

  \begin{enumerate}
  \item[\rm{(i)}] Let $x\in X$ and let $x^*\in X^*$ and $s\in S$. By definition of
    $X^*$ we have $\scalprod{xs}{x^*}=0$, so that
    $\scalprod{x}{x^*s}=0$. Therefore $x\in (X^*S)^\perp=X^{**}$.
\item[\rm{(ii)}] Applying \rm{(i)}, we have $X^*\subset
  (X^*)^{**}$. Also, if $Y\subset Z$, then $Z^*\subset
  Y^*$, which yields $(X^{**})^*\subset X^*$.
  \item[\rm{(iii)}] 
  If $\partial X=\infty$ there is nothing to prove, so assume
  $\partial X<\infty$. From \rm{(i)} and $X\subset XS$, we have
  $X\subset (X^{**}\cap XS)$. Hence
  $$\dim\quot{XS}{(X^{**}\cap XS)}\leq
  \dim \quot{XS}{X}<\infty.$$
  Applying Lemma~\ref{lem:bourbaki}, we therefore have
  \begin{align*}
    \partial X^*&=\dim \quot{X^*S}{X^*}=\dim
    \quot{X^*S}{(XS)^\perp}=\dim \quot{XS}{((X^*S)^\perp\cap XS)}\\
&=\dim\quot{XS}{(X^{**}\cap XS)} \leq \dim \quot{XS}{X}=\partial X. 
  \end{align*}
  \end{enumerate}
\end{proof}

One would expect the stronger properties $X^{**}=X$ and $\partial
X^*=\partial X$ to hold. Unfortunately this is not true for all
subspaces, only for those who are \emph{saturated}, a notion that we
introduce below.

\subsection{Saturated spaces} For a subspace $X$ let
us define the subspace $\tilde{X}$ to be the set of all $x\in L$ such
that
$$xS\subset XS.$$

Clearly we have $X\subset \tilde{X}$, $\tilde{X}S=XS$,  and $\partial \tilde{X}\leq \partial X$.
We remark also that $\tilde{X}\subset \tilde{X}S=XS$ implies that
whenever $X$ is finite-dimensional, so is $\tilde{X}$.

A  subspace $X$ is said to be {\it saturated} if
$$
\tilde{X}=X.
$$
\begin{lem}\label{lem:satdual} For every $F$-subspace $X$ of $L$, the following hold:
\begin{enumerate}
\item[\rm{(i)}]  $X^*$ is saturated.
\item[\rm{(ii)}] If $X$ is finite-dimensional then $X^{**}=\tilde{X}$. In
particular a finite-dimensional subspace $X$ is saturated if and only if $X=X^{**}$.
\end{enumerate}
\end{lem}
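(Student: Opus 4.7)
For part (i), since $X^* \subset \widetilde{X^*}$ is automatic from the definition of $\tilde{\cdot}$, it suffices to establish $\widetilde{X^*} \subset X^*$. The plan is to unpack this inclusion directly via the Frobenius identity \eqref{eq:frobenius}. Let $y \in \widetilde{X^*}$, i.e.\ $yS \subset X^*S$, and take arbitrary $x \in X$ and $s \in S$. Then $\scalprod{y}{xs} = \scalprod{ys}{x}$; since $ys \in X^*S$, we can expand $ys = \sum_i x^*_i s_i$ with $x^*_i \in X^* = (XS)^\perp$ and $s_i \in S$, so that $\scalprod{ys}{x} = \sum_i \scalprod{x^*_i}{s_i x} = 0$ because every $s_i x \in XS$. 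Hence $y \in (XS)^\perp = X^*$.

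For part (ii), the inclusion $\tilde{X} \subset X^{**}$ follows by an entirely symmetric Frobenius computation that does not use any finiteness hypothesis: for $x \in \tilde{X}$ and any generator $x^* s \in X^*S$, one has $\scalprod{x}{x^*s} = \scalprod{xs}{x^*} = 0$ since $xs \in xS \subset XS$ and $x^* \in (XS)^\perp$. For the reverse inclusion $X^{**} \subset \tilde{X}$, take $x \in X^{**} = (X^*S)^\perp$ and fix $s \in S$. For every $x^* \in X^*$ the same manipulation gives $\scalprod{xs}{x^*} = \scalprod{x}{sx^*} = 0$, so $xs \in (X^*)^\perp = \bigl((XS)^\perp\bigr)^\perp = (XS)^{\perp\perp}$. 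At this point the entire argument hinges on the bi\-orthogonality identity $(XS)^{\perp\perp} = XS$; once this is known, $xs \in XS$ for every $s \in S$, hence $xS \subset XS$ and $x \in \tilde{X}$. The ``in particular'' clause then follows, since saturation means $\tilde{X} = X$, which by the equality $X^{**} = \tilde{X}$ is the same as $X^{**} = X$.

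The key obstacle I expect is therefore the bi\-orthogonality statement $V^{\perp\perp} = V$ for a finite-dimensional subspace $V = XS$ of the potentially infinite-dimensional space $L$. The plan is to deduce this from Lemma \ref{lem:bourbaki}. First, the natural evaluation map $L \to V^*$, $u \mapsto \scalprod{\,\cdot\,}{u}\vert_V$, has kernel $V^\perp$ and hence realizes an injection $L/V^\perp \hookrightarrow V^*$, so $L/V^\perp$ is finite-dimensional. Applying Lemma \ref{lem:bourbaki} with $A = L$ and $B = V$, together with the non-degeneracy of the form (so $L^\perp = 0$), yields the exact equality $\dim L/V^\perp = \dim V$ for every finite-dimensional $V$. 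If some $x \in V^{\perp\perp}$ lay outside $V$, the enlargement $V' := V + Fx$ would satisfy $V'^\perp = V^\perp$ (because any $y \in V^\perp$ already pairs trivially with $x$, by definition of $V^{\perp\perp}$), contradicting this dimension formula applied to $V'$. Hence $V^{\perp\perp} = V$, which closes the proof of (ii) and, together with (i), completes the lemma.
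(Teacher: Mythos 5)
Your proof is correct and follows essentially the same route as the paper: part (i) is the identical Frobenius computation, and part (ii) is the same chain of equivalences reducing to the biorthogonality $(XS)^{\perp\perp}=XS$ for finite-dimensional $XS$. The only difference is that you derive this biorthogonality from Lemma~\ref{lem:bourbaki} (via the dimension count $\dim L/V^{\perp}=\dim V$), whereas the paper simply cites Bourbaki for it; your derivation is valid, though beware that your use of $V^*$ for the linear dual clashes with the paper's notation $V^*=(VS)^\perp$.
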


\begin{proof} \hspace{1cm}
\begin{enumerate}
\item[\rm{(i)}] Let $y\in L$ be such that $yS\subset X^*S$, and let us
  prove that $y\in X^*=(XS)^\perp$. Since $ys\in X^*S$, we have
$ys=\sum x^*_is_i$ where $x^*_i\in X^*$ and $s_i\in S$. Therefore,
for any $x\in X$, $s\in S$, we have
\begin{equation*}
\scalprod{y}{xs}=\scalprod{ys}{x}=\sum_i \scalprod{x^*_is_i}{x}=\sum_i
\scalprod{x^*_i}{s_ix}=0,
\end{equation*}
which means that $y\in X^*$.
\item[\rm{(ii)}] We recall that, for a finite-dimensional subspace  $A$, we have
  $(A^\perp)^\perp = A$ (Bourbaki, \cite[Ch. 9, \S 1, n. 6, cor. 1]{bourbaki}).
The assertion follows from:
  \begin{eqnarray*}
    y\in X^{**} & \Leftrightarrow & y\in (X^*S)^\perp\\
                 & \Leftrightarrow & \forall x^*\in X^*, \forall s\in
                                     S, \; \scalprod{y}{x^*s}=0\\
                 & \Leftrightarrow & \forall x^*\in X^*, \forall s\in
                                     S, \; \scalprod{ys}{x^*}=0\\
                 & \Leftrightarrow & yS\subset ((XS)^\perp)^\perp =
                                     XS\\
                 & \Leftrightarrow & y\in\tilde{X}.
  \end{eqnarray*}
\end{enumerate}
\end{proof}

We denote by $\SS$ the family of saturated finite-dimensional 
subspaces $X$ of $L$ together with their duals $X^*$.
We make the remark that, applying Lemma~\ref{lem:bourbaki} with $A=XS$ and $B=L$, the dual of a finite dimensional space has finite co-dimension (where the co-dimension of a space $A$ is defined as
$\dim L/A$). In particular, elements of $\SS$ have either finite dimension or finite co-dimension.

The next lemma
summarizes the properties of the elements of $\SS$ that we will need in
the proof of Theorem \ref{thm:kneser_onesided}.

\begin{lem}\label{lem:SS}
For every $X\in \SS$, $Y\in \SS$,
\begin{enumerate}
\item[\rm{(i)}] $X$ is saturated and $X^*\in \SS$.
\item[\rm{(ii)}] $X^{**}=X$.
\item[\rm{(iii)}] $\partial X=\partial X^*$.
\item[\rm{(iv)}] $X\cap Y\in \SS$.
\item[\rm{(v)}] $(X+Y)^{**}\in \SS$.
\end{enumerate}
\end{lem}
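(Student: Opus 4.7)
The plan is to dispatch (i) and (ii) first by a direct case split on whether $X\in\SS$ is a saturated finite-dimensional subspace or the dual $X=Z^{*}$ of one, and then to bootstrap (iii), (iv), (v) from them using only Lemmas \ref{lem:dual} and \ref{lem:satdual}. For (i): in the saturated finite-dimensional case, $X$ is saturated by assumption and $X^{*}\in\SS$ by the very definition of $\SS$; while if $X=Z^{*}$ with $Z$ saturated finite-dimensional, Lemma \ref{lem:satdual}(i) gives that $X$ is saturated, and Lemma \ref{lem:satdual}(ii) yields $X^{*}=Z^{**}=Z\in\SS$. For (ii): the finite-dimensional case is precisely Lemma \ref{lem:satdual}(ii) applied to $X$, and in the dual case $X=Z^{*}$, Lemma \ref{lem:dual}(ii) gives $X^{**}=Z^{***}=Z^{*}=X$.

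Item (iii) then follows from a two-sided sandwich: Lemma \ref{lem:dual}(iii) applied to $X$ and then to $X^{*}$ gives $\partial X^{**}\leq\partial X^{*}\leq\partial X$, while (ii) identifies $\partial X^{**}$ with $\partial X$, forcing equalities throughout.

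For (iv), I would first check that $X\cap Y$ is saturated, which is easy: if $yS\subset(X\cap Y)S\subset XS\cap YS$, then $y\in\tilde X=X$ and $y\in\tilde Y=Y$ by (i), so $y\in X\cap Y$. What remains is to argue that $X\cap Y$ is either finite-dimensional or the dual of a saturated finite-dimensional space, and the case where \emph{both} $X=Z_{1}^{*}$ and $Y=Z_{2}^{*}$ are duals is the main obstacle. The crux is the identity
\[
X\cap Y=(Z_{1}S)^{\perp}\cap(Z_{2}S)^{\perp}=(Z_{1}S+Z_{2}S)^{\perp}=(Z_{1}+Z_{2})^{*}.
\]
Now $Z_{1}+Z_{2}$ is finite-dimensional but need not be saturated; nevertheless Lemma \ref{lem:dual}(ii) rewrites $(Z_{1}+Z_{2})^{*}=((Z_{1}+Z_{2})^{**})^{*}$, and $(Z_{1}+Z_{2})^{**}$ is the saturation $\widetilde{Z_{1}+Z_{2}}$ by Lemma \ref{lem:satdual}(ii), which is saturated and finite-dimensional, exhibiting $X\cap Y$ as a dual of an element of $\SS$. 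In the remaining subcases, at least one of $X,Y$ is already saturated finite-dimensional, and then $X\cap Y$ is a saturated finite-dimensional subspace of it.

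Finally, (v) follows from (i) and (iv) after the observation
\[
(X+Y)^{*}=((X+Y)S)^{\perp}=(XS+YS)^{\perp}=X^{*}\cap Y^{*}.
\]
Applying (i) gives $X^{*},Y^{*}\in\SS$, then (iv) gives $X^{*}\cap Y^{*}\in\SS$, and a second application of (i) gives $(X^{*}\cap Y^{*})^{*}\in\SS$; the latter coincides with $(X+Y)^{**}$ by the display above, which finishes the lemma.
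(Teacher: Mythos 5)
Your proof is correct. Parts (i)--(iii) follow the paper's argument essentially verbatim: the same case split on whether $X$ is a saturated finite-dimensional space or a dual $Z^*$, with Lemma \ref{lem:satdual} and Lemma \ref{lem:dual}(ii) doing the work, and the same sandwich $\partial X^{**}\le\partial X^*\le\partial X$ combined with (ii) for (iii). In (iv) you diverge slightly: the paper gets saturation of $X\cap Y$ by writing $X\cap Y=(X^*+Y^*)^*$ and invoking Lemma \ref{lem:satdual}(i), whereas you check it directly from the definition of $\tilde{\ }$; both are fine, and your treatment of the ``both duals'' case via $X\cap Y=(Z_1+Z_2)^*=(\widetilde{Z_1+Z_2})^*$ is exactly the paper's (you correctly note, as the paper does in Section 2.3, that the saturation of a finite-dimensional space is finite-dimensional). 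The genuine difference is (v): the paper runs a fresh case analysis, setting $Z:=(X+Y)^*$ and showing $Z$ is finite-dimensional by embedding it in $X_1=X^*$, while you deduce (v) formally from (i) and (iv) via the identity $(X+Y)^*=X^*\cap Y^*$, so that $(X+Y)^{**}=(X^*\cap Y^*)^*$ lands in $\SS$ by two applications of (i) sandwiching one of (iv). Your route is shorter and makes the closure of $\SS$ under $({\cdot}+{\cdot})^{**}$ visibly dual to its closure under intersection, at the cost of relying on (iv) having already been established in full generality (which it has).
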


\begin{proof}
\hspace{1cm} 
\begin{enumerate}
\item[\rm{(i)} \rm{(ii)}] $X$ is saturated by Lemma \ref{lem:satdual}\rm{(i)}. 
If $X\in \SS$ has finite dimension, $X^*$ belongs to $\SS$ by definition, and $X^{**}=X$ by Lemma \ref{lem:satdual}\rm{(ii)}. 
Otherwise, $X=X_1^*$ where $X_1$ is saturated and of finite dimension, and $X^*=X_1^{**}=X_1$ by Lemma \ref{lem:satdual}\rm{(ii)}, so $X^*$ 
belongs to $\SS$ and $X^{**}=X_1^*=X$.

\item[\rm{(iii)}] From Lemma \ref{lem:dual}\rm{(iii)}, we have $\partial X^*\leq \partial X$. Additionally, applying this inequality to $X^*$ and combining with $X^{**}=X$ leads to 
$\partial X^*= \partial X$.

\item[\rm{(iv)}] We have
\begin{equation*}
X^*\cap Y^*=(XS)^\perp \cap (YS)^\perp=(XS+YS)^\perp=((X+Y)S)^\perp=(X+Y)^*.
\end{equation*}
In particular, if $X$ and $Y$ belong to $\SS$, $X\cap Y=X^{**}\cap Y^{**}=(X^*+Y^*)^*$ so, by 
Lemma \ref{lem:satdual}\rm{(i)}, $X\cap Y$ is saturated. If, moreover, $X$ or $Y$ is of finite dimension, we can conclude that $X\cap Y\in \SS$.
Otherwise, $X=X_1^*$ and $Y=Y_1^*$, where $X_1$ and $Y_1$ are both of finite dimension, and $X\cap Y=(X_1+Y_1)^*$. We remark that 
$(X_1+Y_1)^*=(X_1+Y_1)^{***}=(\widetilde{X_1+Y_1})^*$, applying Lemma \ref{lem:dual}\rm{(ii)} and Lemma \ref{lem:satdual}\rm{(ii)}, so $X\cap Y\in \SS$.

\item[\rm{(v)}] If the dimensions of $X$ and $Y$ are finite, then, by Lemma \ref{lem:satdual}\rm{(ii)}, $(X+Y)^{**}=\widetilde{X+Y}$ belongs to $\SS$.
Otherwise, without loss of generality we may assume that $X=X_1^*$ where $X_1$ is saturated and of finite dimension. Let $Z:=(X+Y)^*$. 
By Lemma \ref{lem:satdual}\rm{(i)}, $Z$ is saturated, and by Lemma \ref{lem:satdual}\rm{(ii)}, $Z\subset X^*=X_1^{**}=X_1$, so $Z$ is of finite dimension and we can conclude
that $Z^*=(X+Y)^{**}$ belongs to $\SS$. 
\end{enumerate}
\end{proof}

In the proof of Theorem \ref{thm:kneser_onesided}, we will apply many
times the submodularity inequality of Proposition \ref{prop:subm} to
certain subspaces $X$ and $Y$ belonging to $\SS$. We will have  $X\cap Y\in
\SS$  (Lemma \ref{lem:SS} \rm{(iv)}), but we will have to deal with
the issue that in general $X+Y\notin \SS$.
Lemma \ref{lem:SS} \rm{(v)} will allow us to replace $X+Y$ by the larger
$(X+Y)^{**}$ since $\partial (X+Y)^{**}\leq \partial (X+Y)$. The
following Lemma will be used several times in order to ensure that 
$(X+Y)^{**}\neq L$ holds and that we do not have $\partial (X+Y)^{**}=0$.

\begin{lem}\label{lem:sum}
Let $X$ and $Y$ be subspaces of $L$ such that  $\dim X<\infty$,
$\dim XS\leq\dim X +\dim S -1$,  $\dim(X\cap Y)\geq 1$ and $\dim X
\leq \dim Y^*$.
Then
\begin{equation*}
(X+Y)^{**}\neq L.
\end{equation*}
\end{lem}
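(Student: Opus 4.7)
The plan is to recast the desired non-equality $(X+Y)^{**} \neq L$ as the non-vanishing of $X^* \cap Y^*$, and then exhibit a nonzero element of that intersection by a duality and dimension count. Using $(A+B)^\perp = A^\perp \cap B^\perp$ with $A = XS$, $B = YS$ gives $(X+Y)^* = ((X+Y)S)^\perp = (XS+YS)^\perp = X^* \cap Y^*$, hence $(X+Y)^{**} = ((X^* \cap Y^*)S)^\perp$. Since $1 \in S$ and the Frobenius form is nondegenerate on $L$ (so $L^\perp = \{0\}$), this equals $L$ if and only if $X^* \cap Y^* = \{0\}$. So the task reduces to showing that $X^* \cap Y^* \neq \{0\}$.

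First I would set up a perfect pairing between $L/X^*$ and $XS$. Because $\dim X < \infty$, also $\dim XS < \infty$, and Lemma~\ref{lem:bourbaki} applied with $A=XS$, $B=L$ gives $\dim L/X^* = \dim XS$. The Frobenius form then induces a well-defined non-degenerate bilinear pairing $L/X^* \times XS \to F$ by $(z+X^*, v) \mapsto \scalprod{z}{v}$. I would then introduce the natural projection $\pi : Y^* \to L/X^*$, for which $\ker \pi = X^* \cap Y^*$, and bound $\dim \pi(Y^*)$ from above using this pairing.

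The key computation uses the hypothesis $\dim(X \cap Y) \geq 1$ to pick a nonzero $x_0 \in X \cap Y$. Since $L$ is a field, multiplication by $x_0$ is injective, so $x_0 S$ is a subspace of $XS \cap YS$ of dimension $\dim S$. Under the perfect pairing, the annihilator of $\pi(Y^*)$ in $XS$ equals $XS \cap (Y^*)^\perp$; since $YS \subset (Y^*)^\perp$ by the very definition $Y^* = (YS)^\perp$, this annihilator contains $XS \cap YS \supseteq x_0 S$. Hence
\[
  \dim \pi(Y^*) \;\leq\; \dim XS - \dim S \;\leq\; \dim X - 1
\]
by the hypothesis $\dim XS \leq \dim X + \dim S - 1$, and therefore
\[
  \dim(X^* \cap Y^*) \;=\; \dim \ker \pi \;\geq\; \dim Y^* - (\dim X - 1) \;\geq\; 1,
\]
where we invoke the hypothesis $\dim Y^* \geq \dim X$. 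The argument works uniformly whether $Y^*$ is finite- or infinite-dimensional, since $\pi(Y^*)$ is always finite-dimensional.

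The main obstacle will be ensuring that this duality argument is legitimate in the possibly infinite-dimensional setting, as $L$, $Y$ and $Y^*$ may each be infinite-dimensional and the familiar identifications from finite-dimensional linear algebra do not apply directly. Lemma~\ref{lem:bourbaki} is precisely what makes the perfect pairing between the finite-dimensional spaces $L/X^*$ and $XS$ available, and the non-trivial intersection $x_0 S \subset XS \cap YS$ is the essential geometric input that turns the hypothesis $\dim(X\cap Y)\geq 1$ into a dimension gain of $\dim S$ in the annihilator.
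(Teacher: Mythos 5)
Your proof is correct and is essentially the paper's own argument: both reduce the claim to showing $(X+Y)^*=X^*\cap Y^*\neq\{0\}$, and both bound the codimension of $(X+Y)^*$ in $Y^*$ by $\dim XS-\dim(XS\cap YS)\leq\dim X-1<\dim Y^*$, using Lemma~\ref{lem:bourbaki} for the duality step and a nonzero $x_0\in X\cap Y$ to get $\dim(XS\cap YS)\geq\dim S$. Your perfect-pairing/annihilator packaging of the duality is only a cosmetic variant of the paper's direct application of Lemma~\ref{lem:bourbaki} to $(X+Y)S$ and $Y^*$.
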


\begin{proof}
We will prove that $(X+Y)^*=(X+Y)^{***}\neq L^*=\{0\}$. 
Since $Y\subset (X+Y)$, we have $(X+Y)^*\subset Y^*$. 
We will show that $\dim
\quot{Y^*}{(X+Y)^*}$ is finite and less than $\dim(Y^*)$, which will
imply $(X+Y)^*\neq \{0\}$. Note that 
$$YS\subset ((YS)^\perp)^\perp\cap (X+Y)S\subset (X+Y)S\subset XS+YS.$$
Therefore
$$\dim \quot{(X+Y)S}{\left(((YS)^\perp)^\perp\cap (X+Y)S\right)}\leq \dim\quot{(XS+YS)}{YS}.$$
The right-hand side is finite, whence also the left-hand side, which,
by Lemma~\ref{lem:bourbaki}, equals $\dim \quot{Y^*}{(X+Y)^*}$.
We therefore have

\begin{align*}
\dim \quot{Y^*}{(X+Y)^*}&\leq \dim\quot{(XS+YS)}{YS}=\dim \quot{XS}{(XS\cap
  YS)}\\
&=\dim XS-\dim(XS\cap YS).
\end{align*}
From the hypothesis we have $\dim XS\leq \dim X+\dim S-1$ and from
$\dim(X\cap Y)\geq 1$ we have $\dim(XS\cap YS)\geq \dim S$. Hence
$$\dim \quot{Y^*}{(X+Y)^*} \leq \dim X+\dim S-1 -\dim S=\dim X-1 <\dim
Y^*.$$
\end{proof}

When trying to prove that a saturated subspace $X$ has a non-trivial
stabilizer, it will be useful to consider its dual subspace instead.
The last Lemma of this section states that an element
stabilizes a saturated subspace if and only if it stabilizes
its dual subspace.

\begin{lem}\label{lem:subfield}
If $X\in \SS$ and $k\in L$, then $kX\subset X$ if and only if $kX^*\subset X^*$.
\end{lem}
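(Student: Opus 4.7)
The plan is to exploit the Frobenius property of the bilinear form, namely $(ab|c) = (a|bc)$, together with the symmetry of the double-dual identity $X^{**}=X$ on $\SS$ furnished by Lemma \ref{lem:SS}(ii).

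First I would handle the forward implication directly. Assume $kX \subset X$ and pick any $y \in X^* = (XS)^\perp$. To show $ky \in X^*$, I need to verify that $(xs \mid ky) = 0$ for every $x \in X$ and $s \in S$. Using commutativity of $L$ and applying the Frobenius relation \eqref{eq:frobenius} twice, I would rewrite
\[
(xs \mid ky) \;=\; \sigma(xsky) \;=\; \sigma\bigl((kx)\,s\,y\bigr) \;=\; \bigl((kx)s \;\big|\; y\bigr).
\]
Since $kx \in X$ by hypothesis, the product $(kx)s$ lies in $XS$, and therefore the pairing with $y \in (XS)^\perp$ vanishes. Hence $ky \in X^*$, giving $kX^* \subset X^*$.

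For the converse, I would argue by symmetry rather than redoing the calculation. If $kX^* \subset X^*$, then since Lemma \ref{lem:SS}(i) tells us $X^* \in \SS$, I can apply the forward implication already established, but with the role of $X$ played by $X^*$. This yields $kX^{**} \subset X^{**}$. Invoking Lemma \ref{lem:SS}(ii), which gives $X^{**}=X$ for every $X \in \SS$, I immediately conclude $kX \subset X$.

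There is no real obstacle here: once one has the Frobenius identity and the fact that $\SS$ is stable under the dual operation with $X^{**}=X$, the lemma is essentially a one-line consequence. The only subtle point to be careful about is ensuring that $X \in \SS$ is used when applying $X^{**}=X$ in the reverse direction, because this identity is not available for arbitrary subspaces (cf.\ the remark preceding the definition of saturation), and the use of Lemma \ref{lem:SS}(i) to guarantee $X^* \in \SS$ is what legitimises the symmetry argument.
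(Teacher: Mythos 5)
Your proof is correct and follows essentially the same route as the paper: the forward implication via the Frobenius identity $(xs\mid ky)=((kx)s\mid y)$, and the converse by applying the forward implication to $X^*$ and invoking $X^{**}=X$ from Lemma~\ref{lem:SS}(ii). (The appeal to Lemma~\ref{lem:SS}(i) is harmless but not needed, since the forward implication holds for arbitrary subspaces.)
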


\begin{proof}
For $x^*\in X^*$, $x\in X$ and $s\in S$, we have
\begin{equation*}
\scalprod{kx^*}{xs}=\scalprod{x^*}{kxs}
\end{equation*}
from which we get that if $kX\subset X$ then $\scalprod{kx^*}{xs}=0$ for every $x,x^*,s$, whence $kx^*$ is in $X^*$. Therefore $kX^*\subset X^*$.

if $kX^*\subset X^*$ then we have just proved
that $kX^{**}\subset X^{**}$, and Lemma \ref{lem:SS}
\rm{(ii)} gives the desired conclusion.
\end{proof}

We remark that if $X$ is finite dimensional and if $kX\subset X$ for some non-zero $k$, then
$k$ can only be of finite degree over $F$, and we have $k^{-1}X\subset X$,
(and therefore $X=kX$). The stabilizer $H(X)=\{k\in L, kX\subset X\}$ is a field in this case. Lemma~\ref{lem:subfield} implies in particular that
stabilizers of spaces of $\SS$ are subfields of $L$. Summarizing:

\begin{cor}\label{cor:subfield}
  If $X\in \SS$, then the stabilizer $H(X)$ is a subfield satisfying
  $H(X)=H(X^*)$.
\end{cor}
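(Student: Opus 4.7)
The corollary contains two claims: $H(X) = H(X^*)$ and $H(X)$ is a subfield of $L$. For the first, I would apply Lemma~\ref{lem:subfield} in each direction. The implication $kX\subset X \Rightarrow kX^*\subset X^*$ is the content of the lemma; the converse $kX^*\subset X^* \Rightarrow kX\subset X$ comes from applying the same lemma to $X^*$, which yields $kX^{**}\subset X^{**}$, and then invoking $X^{**} = X$ from Lemma~\ref{lem:SS}(ii).

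For the subfield property, closure under addition and multiplication and the containment $0,1 \in H(X)$ are immediate from the defining condition $kX \subset X$; the only point that needs real work is the existence of multiplicative inverses. The plan is to reduce to the finite-dimensional case, where the existence of inverses has already been argued in the remark preceding the corollary: for a finite-dimensional $W$ and nonzero $k \in H(W)$, multiplication by $k$ is an injective $F$-linear endomorphism of $W$, hence a bijection, so $k^{-1}W \subset W$; the same observation forces $F[k]$ to be a finite-dimensional $F$-subalgebra of $L$ and therefore a field, giving $k^{-1} \in F[k] \subset H(W)$.

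The reduction uses the structural description of $\SS$: by definition every $X \in \SS$ is either a finite-dimensional saturated subspace or the dual $Y^*$ of such a space. In the first case the finite-dimensional remark applies directly to $X$. In the second case $X^* = Y^{**} = Y$ is finite-dimensional by Lemma~\ref{lem:SS}(ii), so the remark shows $H(X^*)$ is a subfield, and the equality $H(X) = H(X^*)$ established in the first paragraph transports this conclusion back to $H(X)$. The only obstacle worth flagging is spotting this dichotomy inside $\SS$; once one notices that duality forces at least one of $X, X^*$ to be finite-dimensional, the rest is routine bookkeeping on top of the results already proved.
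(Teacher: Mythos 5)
Your proposal is correct and follows essentially the same route as the paper, which obtains the corollary from the remark that stabilizers of nonzero finite-dimensional spaces are fields (via algebraicity of any stabilizing element) combined with Lemma~\ref{lem:subfield} and the fact that every element of $\SS$ or its dual is finite-dimensional. The only cosmetic difference is that you re-derive the backward implication of Lemma~\ref{lem:subfield} from the forward one plus $X^{**}=X$, whereas the lemma as stated is already an equivalence.
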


\section{Structure of cells and kernels of a subspace}\label{sec:main}

We assume, like in the previous section, that $S$ is a finite dimensional $F$-subspace of $L$ containing $1$, and that $L=F(S)$. 
We will moreover  assume that there does not exist a field $K$, $F\subsetneq K\subset L$, such that $KS=S$ (in other words $H(S)=F$). Note that
when such a $K$ exists, the conclusion of Theorem~\ref{thm:kneser_onesided} holds trivially.
With the objective of working towards a proof of Theorem~\ref{thm:kneser_onesided},
we will also assume that there exists a non-zero finite dimensional subspace $T\subset L$ such
that $ST\neq L$ and 
$$
\dim ST<\dim S+\dim T-1.
$$
Equivalently, $\partial T<\dim S - 1$. 

Let
$$
\Lambda=\{\partial (X): X\in \SS  \}.
$$
We denote the elements of $\Lambda$ by
$$
\Lambda=\{ 0=\lambda_0<\lambda_1<\lambda_2<\cdots \}
$$
and by
$$
\SS_i=\{ X\in \SS: \partial X=\lambda_i\}.
$$
Spaces belonging to a set $\SS_i$ will be called $i$--{\em cells}. By
Lemma \ref{lem:SS} \rm{(i) (iii)}, the dual of an $i$--cell is an $i$-cell.
An $i$--cell of smallest dimension  will be said to be an  $i$--{\em  kernel}.
We note that $i$--kernels are always of finite dimension, because the dual
of an infinite-dimensional $i$--cell is an $i$--cell and must have finite dimension. 

Suppose $X$ is finite-dimensional and $SX=X$. Then either $X=\{0\}$ or
$S$ must be a field, so that $S=F(S)=L$. From this we get that
$\SS_0=\{\{0\},L\}$. From our assumption on the existence of $T$, we get $\lambda_1<\dim S-1$. 
Let $n$ be the largest integer such that
$\lambda_n<\dim S$. We note that we have $\lambda_n=\dim S -1$ and
that $F$ is an $n$--kernel since $\partial F = \dim S -1$ and $F$ is
saturated, otherwise $\tilde{F}$ would contradict our assumption on the
non-existence of a field $K\neq F$ such that $KS=S$.

If $N$ is an $i$--kernel, then clearly so is $xN$ for any non-zero
$x\in L$. Therefore, when an $i$--kernel exists, there exists in
particular an
$i$--kernel containing $F$. Let $F_1,F_2,\ldots ,F_n$ be $1,2,\ldots
,n$--kernels containing $F$, which implies $F_n=F$ by the remark just above.

Our core result is the following theorem.

\begin{theorem}\label{thm:linbal} We have
  $$F_1\supset F_{2}\supset \cdots \supset F_n.$$
  Furthermore the $F_i$ are all subfields of $L$, and 
  every  space $X\in \SS_i$  is stabilized by $F_i$.
\end{theorem}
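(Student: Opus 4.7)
The plan is to translate Balandraud's combinatorial approach into the linear setting by repeatedly applying the submodularity inequality (Proposition~\ref{prop:subm}) to pairs of elements of $\SS$, exploiting the closure of $\SS$ under $X \cap Y$ and $(X+Y)^{**}$ supplied by Lemma~\ref{lem:SS}\rm{(iv)(v)}. I would establish, by induction on $i$, three properties in this order: (a) for every $i$-cell $X$ containing $F$, one has $F_i \subset X$ (uniqueness of the $i$-kernel through $F$); (b) $F_i$ stabilises every $X \in \SS_i$; (c) $F_i$ is a subfield of $L$. The chain $F_1 \supset F_2 \supset \cdots \supset F_n$ would then be deduced as a separate fourth step.

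\textbf{Proving (a), (b), (c).} For (a), given an $i$-cell $X$ with $F \subset X$, I would apply submodularity to $X$ and $F_i$, using Lemma~\ref{lem:SS}\rm{(iv)(v)} to remain inside $\SS$ and Lemma~\ref{lem:sum} (together with Lemma~\ref{lem:SS}\rm{(iii)} to dualise when $X$ is infinite-dimensional) to ensure $(X+F_i)^{**} \ne L$:
\begin{equation*}
\partial\bigl((X+F_i)^{**}\bigr) + \partial(X \cap F_i) \le 2\lambda_i.
\end{equation*}
Both summands then lie in $\Lambda$, and $X \cap F_i$ is non-trivial since $F \subset X \cap F_i$. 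Because $X \cap F_i \subset F_i$, the minimality of $F_i$ means that $\partial(X \cap F_i) = \lambda_i$ would already force $X \cap F_i = F_i$, giving $F_i \subset X$. The remaining case $\partial(X \cap F_i) \ne \lambda_i$ is where the induction hypothesis enters: if $\partial(X \cap F_i) = \lambda_j$ with $j < i$, one uses (a)--(c) at index $j$ to see that $X \cap F_i$ is stabilised by the lower-index kernel $F_j$, and combined with the resulting bound $\partial((X+F_i)^{**}) \le 2\lambda_i - \lambda_j$ one aims to construct an $i$-cell strictly smaller than $F_i$, contradicting minimality. Once (a) is proved, (b) follows by an orbit argument: for any $X \in \SS_i$ and any $0 \ne y \in X$, the subspace $y^{-1}X$ lies in $\SS_i$ and contains $F$, so (a) gives $F_i \subset y^{-1}X$, hence $yF_i \subset X$, and summing over $y$ yields $XF_i \subset X$. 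Taking $X = F_i$ in (b) gives $F_i F_i \subset F_i$; since $F_i$ is finite-dimensional and contains $1$, this yields the subfield property (c).

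\textbf{The chain.} For $i < j$ I would apply submodularity to $F_i$ and $F_j$:
\begin{equation*}
\partial\bigl((F_i+F_j)^{**}\bigr) + \partial(F_i \cap F_j) \le \lambda_i + \lambda_j,
\end{equation*}
with $F \subset F_i \cap F_j$ ensuring non-triviality. The desired inclusion $F_j \subset F_i$ corresponds to $\partial(F_i \cap F_j) = \lambda_j$, because minimality of $F_j$ then forces $F_i \cap F_j = F_j$. The main obstacle, which I expect to be the hardest part of the proof, is ruling out the intermediate possibilities $\partial(F_i \cap F_j) = \lambda_k$ for $k < j$, $k \ne i$: here I would combine Corollary~\ref{cor:subfield} with the subfield and stabilisation information at lower indices already supplied by (a)--(c), aiming to show that any such configuration would produce a saturated subspace contradicting the minimality of $F_i$ or $F_j$.
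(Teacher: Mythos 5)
Your overall toolkit (submodularity plus minimality of kernels, staying inside $\SS$ via Lemma~\ref{lem:SS}, using Lemma~\ref{lem:sum} to keep $(X+Y)^{**}\neq L$) is exactly the right one, and your reduction of (b) to (a) by translating an $i$-cell so that it contains $F$ is sound, as is (c) from (b). But the proposal has a genuine gap precisely at the step you flag as "the main obstacle": ruling out $\partial(X\cap F_i)=\lambda_j$ for $1\le j<i$. Your plan is to invoke stabilization by $F_j$ at the lower level and then "construct an $i$-cell strictly smaller than $F_i$", but a single application of submodularity to $X$ and $F_i$ does not produce such a cell: if $X\cap F_i$ is a $j$-cell you only learn $F_j\subset X\cap F_i$ and $\partial((X+F_i)^{**})\le 2\lambda_i-\lambda_j$, which pins down neither space. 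The paper's mechanism for closing this case is to first establish the chain $F_1\supset\cdots\supset F_{i-1}$ \emph{before} anything else at level $i$ (Lemma~\ref{claim4}, Proposition~\ref{claim5}), combined with the divisibility argument of Lemma~\ref{claim1} showing that no $i$-cell can be stabilized by $F_{i-1}$. The chain is what allows one to say "stabilized by $F_j$ for some $j\le i-1$ implies stabilized by $F_{i-1}$", so that varying the translate $x$ forces the whole space to be stabilized by $F_{i-1}$ and contradicts Lemma~\ref{claim1}. By postponing the chain to a "separate fourth step" you remove the very tool needed to exclude the intermediate cells, both in your step (a) and in your final chain step (where, incidentally, you also leave the case $\partial(F_i\cap F_j)=\lambda_i$, giving the unwanted inclusion $F_i\subset F_j$, unaddressed). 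The order of the induction is not a cosmetic choice here; it is load-bearing.

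A second, related underestimate: even granting the lower-level data, proving that every $i$-cell (not just the kernel) is stabilized by $F_i$ is the hardest part of the theorem, and in the paper (Proposition~\ref{claim7}) it requires an iterated descent — intersecting $X$ with translates $xF_k$, passing to the dual $X^*$ and intersecting with a $(j+1)$-kernel, with the quantitative dimension bounds of Lemma~\ref{lem:referee*} and inequality~\eqref{eq:dimX} needed to keep Lemma~\ref{lem:sum} applicable. Your statement (a) for an arbitrary $i$-cell containing $F$ is essentially equivalent to this stabilization claim, so the orbit argument does not simplify matters; it only relocates the entire difficulty into (a), where it is then handled by a one-sentence aspiration. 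To repair the proposal you should adopt the paper's ordering within each level — chain first, then subfield, then stabilization — and supply the missing descent argument for the stabilization step.
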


Note that this last statement implies in particular that the
$i$-kernel containing $F$ is unique. 

We shall prove Theorem~\ref{thm:linbal} in several steps.

First we prove the result for~$F_1$. This simple case illustrates
the general methodology that consists in intersecting the cell under
study $X$ with some other cell $Y$, and applying
the submodularity relation of Proposition~\ref{prop:subm}. The goal
is to prove that the intersection $X\cap Y$ is either $X$ or $Y$ by
arguing that one of the two-cells is a kernel, and that it has minimum
dimension among cells with a given boundary. For this one needs
to bound from above the boundary of the intersection $X\cap Y$,
which is achieved through Proposition~\ref{prop:subm} and 
a lower bound on the boundary of the sum $X+Y$. Most of the technicalities
go into deriving these lower bounds.

\begin{prop}\label{prop:F1}
  $F_1$ is a subfield of $L$ and any $1$--cell $X$ satisfies $XF_1=X$.
\end{prop}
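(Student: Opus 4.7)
The plan is first to prove that every $1$-cell $X$ is stabilized by $F_1$, i.e., $F_1 X \subset X$, and then to specialize this to $X = F_1$ itself, which is a $1$-cell by definition of being a $1$-kernel. This yields $F_1 \cdot F_1 \subset F_1$; together with $1 \in F \subset F_1$, it makes $F_1$ a finite-dimensional unital $F$-subalgebra of $L$, and as a finite-dimensional integral domain over $F$ it is automatically a subfield.

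To show stabilization, I would first reduce to the case $\dim X < \infty$. If $\dim X = \infty$ then $X$ has finite codimension in $L$, so $X^*$ is finite-dimensional and is still a $1$-cell by Lemma \ref{lem:SS}(i),(iii); since $X^{**} = X$, Lemma \ref{lem:subfield} translates stabilization of $X$ by any $f \in F_1$ into stabilization of $X^*$. With $\dim X < \infty$, fix any nonzero $x \in X$ and consider the translated space $xF_1$, which is a $1$-kernel (same dimension, same boundary, and still saturated). The submodularity inequality (Proposition \ref{prop:subm}) combined with $\partial((X+xF_1)^{**}) \le \partial(X+xF_1)$, coming from $(X+xF_1)^{**} = \widetilde{X+xF_1}$ (Lemma \ref{lem:satdual}(ii)), yields
\[
\partial(X \cap xF_1) + \partial((X + xF_1)^{**}) \le \partial X + \partial(xF_1) = 2\lambda_1.
\]

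The intersection $X \cap xF_1$ lies in $\SS$ (Lemma \ref{lem:SS}(iv)), contains $x \ne 0$, and is contained in the finite-dimensional $xF_1$, so it is neither $\{0\}$ nor $L$, forcing $\partial(X \cap xF_1) \ge \lambda_1$. The sum $(X+xF_1)^{**}$ lies in $\SS$ (Lemma \ref{lem:SS}(v)), contains $X \ne 0$, and provided it differs from $L$ it similarly satisfies $\partial((X+xF_1)^{**}) \ge \lambda_1$. Equality must then hold throughout, so $X \cap xF_1$ is a $1$-cell sitting inside the $1$-kernel $xF_1$; by minimality of kernel dimension this forces $X \cap xF_1 = xF_1$, i.e., $xF_1 \subset X$. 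Ranging over all nonzero $x \in X$ gives $F_1 X \subset X$.

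The main obstacle is guaranteeing $(X+xF_1)^{**} \ne L$, which is precisely what Lemma \ref{lem:sum} provides with $Y = xF_1$ once its hypotheses are checked. Finite-dimensionality of $X$, the bound $\dim XS \le \dim X + \dim S - 1$ (from $\partial X = \lambda_1 < \dim S - 1$), and $\dim(X \cap xF_1) \ge 1$ are immediate. The remaining hypothesis $\dim X \le \dim(xF_1)^* = \dim F_1^*$ is automatic when $\dim L = \infty$ since $F_1^*$ is then infinite-dimensional; when $\dim L < \infty$, both $X^*$ and $F_1^*$ are $1$-cells so $\dim X^* \ge \dim F_1$ by minimality, and the identity $\dim Y + \dim Y^* = \dim L - \partial Y$ applied to $Y = X$ and $Y = F_1$ then translates $\dim X^* \ge \dim F_1$ into $\dim X \le \dim F_1^*$.
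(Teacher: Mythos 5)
Your proof is correct and follows essentially the same route as the paper: intersect the $1$-cell $X$ with a translate $xF_1$, apply submodularity, use Lemma~\ref{lem:sum} to force $\partial\bigl((X+xF_1)^{**}\bigr)\geq\lambda_1$, and conclude by minimality of the kernel dimension. The only (harmless) deviation is in how you verify the last hypothesis of Lemma~\ref{lem:sum}: the paper feeds the finite-dimensional space $xF_1$ as the first argument, so that the condition becomes $\dim xF_1\leq\dim X^*$ and follows at once from $X^*$ being a $1$-cell, which spares both your preliminary reduction to $\dim X<\infty$ and your finite/infinite-dimensional case analysis for $\dim X\leq\dim F_1^*$.
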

\begin{proof}
  Let $X$ be a $1$--cell and let $x$ be a non-zero vector of $X$, so
  that $X$ has a non-zero intersection with $xF_1$.
By submodularity we have
$$
\partial (xF_1+ X)^{**}+\partial (xF_1\cap X)\le \partial (xF_1+ X)+\partial (xF_1\cap X)\le 2\lambda_1.
$$
By Lemma~\ref{lem:SS} \rm{(iv)} we have $xF_1\cap X\in \SS$. 
Since $xF_1\cap X$ is non-zero and not equal to $L$ (because $F_1\neq L$),
$xF_1\cap X\in \SS_k$ for some $k\ge 1$.
Since $xF_1$ is a $1$--kernel, and since $X^*$ is also a $1$--cell, we have $\dim xF_1\leq \dim X^*$ and since kernels are finite-dimensional, Lemma~\ref{lem:sum} implies $(xF_1+ X)^{**}\in \SS_\ell$ for some $\ell\ge 1$.
It  follows that $k=\ell=1$. Therefore, $xF_1\cap X$ is a $1$--cell and, by the minimality of the
dimension of $1$--kernels, we have $xF_1\subset X$. Since this holds for an
arbitrary $x\in X$,  we have proved $XF_1=X$. Applying this
to $X=F_1$ we obtain that $F_1$ is a subfield of $L$.
\end{proof}

Let $J$ be the set of positive integers $j\in [1,\ldots ,n]$ satisfying
the conditions
\begin{itemize}
\item
$F_1\supset \cdots \supset F_{j-1}\supset F_{j}$
\item $F_{j}$ is a subfield of $L$ 
\item any $j$--cell is stabilized by $F_{j}$. 
\end{itemize}
Proposition~\ref{prop:F1} tells us that $1\in J$, so that $J\neq\emptyset$.
The proof of Theorem~\ref{thm:linbal} will be complete if we can show that $J$ equals the whole interval $[1,n]$. We therefore assume by contradiction that $\overline{J}=[1,n]\setminus J\neq\emptyset$ and define $i$ to be the smallest integer in $\overline{J}$. We then proceed to show that the integer $i$ also satisfies the above three conditions, contradicting $i\not\in J$.
Specifically we shall prove that $F_i\subset F_{i-1}$ (Proposition~\ref{claim5}), 
that $F_i$ is also a subfield (Proposition~\ref{claim6}) and that $F_i$ stabilizes every
$i$--cell (Proposition~\ref{claim7}).

\begin{lem}\label{claim1}
  No $i$--cell $X$ is stabilized by $F_{i-1}$.
\end{lem}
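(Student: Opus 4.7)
We proceed by contradiction. Suppose some $i$-cell $X$ satisfies $F_{i-1}X=X$. By Lemma~\ref{lem:subfield}, the dual $X^*$ is also stabilized by $F_{i-1}$, and by Lemma~\ref{lem:SS}(i)(iii) it is again an $i$-cell; since every element of $\SS$ is finite-dimensional or of finite codimension, we may replace $X$ by $X^*$ if necessary so that $X$ itself is finite-dimensional. Multiplying $X$ by the inverse of any nonzero element preserves dimension, boundary, saturation, and the property $F_{i-1}X=X$, so we may further assume $1\in X$, whence $F_{i-1}\subset X$; the inclusion is strict because $\partial X=\lambda_i>\lambda_{i-1}=\partial F_{i-1}$.

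The plan is now to apply Proposition~\ref{prop:subm} to $X$ and a carefully chosen second subspace $Y$, and use the resulting submodular inequality to exhibit, inside $X$, a saturated $F_{i-1}$-stable subspace at level $i$ but of dimension strictly smaller than that of the $i$-kernel $F_i$, contradicting the minimality of $F_i$. A natural candidate for $Y$ is a multiplicative translate $yF_{i-1}$ of the $(i-1)$-kernel: for any $y \notin F_{i-1}$ it is an $(i-1)$-cell meeting $F_{i-1}$ trivially, while for $y\in X$ it sits inside $X$ by the standing stabilization hypothesis. An alternative is to take $Y$ itself to be an $i$-cell, such as the $i$-kernel $F_i$ (so that $X\cap Y\supset F\neq \{0\}$) or a translate $xX$. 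In each scenario the induction hypothesis $i-1\in J$---which asserts that every cell at level $\le i-1$ is $F_{i-1}$-stable---together with Lemma~\ref{lem:SS}(iv)(v) keeps the intersection $X\cap Y$ and the saturation $(X+Y)^{**}$ inside the class of $F_{i-1}$-stable saturated spaces.

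The main obstacle is the judicious choice of $Y$ so that Lemma~\ref{lem:sum} applies (guaranteeing $(X+Y)^{**}\ne L$, hence that $(X+Y)^{**}$ is a genuine cell of positive boundary) and so that the submodular inequality forces $X\cap Y$ to land precisely at level $i$, rather than at some lower level. This mirrors the pattern of Proposition~\ref{prop:F1}, now transplanted to level $i$ and exploiting the subfield structure of $F_{i-1}$ established inductively. Once an $i$-cell strictly inside $F_i$---or equivalently, an $i$-cell of dimension strictly less than $\dim F_i$---is exhibited, the contradiction with the defining minimality of the $i$-kernel is immediate, completing the proof.
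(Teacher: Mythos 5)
Your argument is a strategy outline rather than a proof: you explicitly name ``the main obstacle'' --- producing a subspace $Y$ for which submodularity forces $X\cap Y$ to be an $i$--cell of dimension strictly less than $\dim F_i$ --- and then stop without resolving it. None of the candidates you float visibly works. With $Y=yF_{i-1}$ for $y\in X$, the very hypothesis $F_{i-1}X=X$ gives $X\cap Y=yF_{i-1}$ and $X+Y=X$, so Proposition~\ref{prop:subm} yields nothing. With $Y=F_i$ (after arranging $1\in X$), the best outcome of the submodularity computation is $\partial(X\cap F_i)=\lambda_i$, which by minimality of the $i$--kernel gives $F_i\subset X$ --- not a contradiction. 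More fundamentally, the submodularity machinery controls the level $\partial(X\cap Y)$ of an intersection but offers no mechanism for converting the hypothesis ``$X$ is $F_{i-1}$-stable'' into a dimension strictly below $\dim F_i$; indeed at this stage of the induction essentially nothing is yet known about $F_i$, and this lemma is the input to the later structural results, not a consequence of them. So there is a genuine gap, and it is not clear the plan can be completed.

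The paper's proof is of an entirely different and much more elementary nature: a divisibility count, which is the numerical fact anchoring the whole induction. If $F_{i-1}X=X$, then $X$ and $XS$ are vector spaces over the field $F_{i-1}$ (this uses $i-1\in J$), so $\lambda_i=\dim XS -\dim X$ (or $\dim \quot{XS}{X}$ in general) is a multiple of $\dim_F F_{i-1}$; the same holds for $\lambda_{i-1}=\dim F_{i-1}S-\dim F_{i-1}$. Since $\lambda_i>\lambda_{i-1}$ and both are multiples of $\dim F_{i-1}$, one gets $\lambda_i\ge \lambda_{i-1}+\dim F_{i-1}=\dim F_{i-1}S\ge \dim S$, contradicting $\lambda_i\le\lambda_n=\dim S-1$. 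Your opening reductions (passing to $X^*$ via Corollary~\ref{cor:subfield} to make $X$ finite-dimensional, translating so that $F_{i-1}\subsetneq X$) are correct but are not needed for, and do not engage with, this counting argument.
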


\begin{proof} Suppose $F_{i-1}X=X$. Then $X$ and $SX$
 are $F_{i-1}$-vector spaces and $\lambda_{i}=\dim \quot{XS}{X}$
 is a multiple of $\dim F_{i-1}$. The quantity
 $\lambda_{i-1}=\dim F_{i-1}S - \dim F_{i-1}$ is also a multiple of
 $\dim F_{i-1}$, and since $\lambda_i>\lambda_{i-1}$,
 $$
 \lambda_i\ge \lambda_{i-1}+\dim F_{i-1}= \dim F_{i-1}S\ge \dim S,
 $$
 contradicting $\lambda_i<\dim S$. 
\end{proof}

\begin{lem}\label{lem:referee*}
  Let $1\leq j \leq i-1$, and suppose $\dim F_i\leq \dim F_j$. Then, for any $x\in F_i$ we have $(xF_j+F_i)^{**}\neq F_{j-1}$, where we adopt the convention $F_0=L$. In particular if  $\dim F_i\leq \dim F_1$, then $(xF_1+F_i)^{**}\neq L$.
\end{lem}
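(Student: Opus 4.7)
The plan is to reduce the claim to a codimension comparison inside $Y^*:=(xF_j)^*$, after dispatching two trivial cases. If $x=0$, then $(xF_j+F_i)^{**}=F_i^{**}=F_i$ is an $i$-cell, which cannot equal the $(j-1)$-cell $F_{j-1}$ since $i\neq j-1$. If $F_i\not\subset F_{j-1}$, then $(xF_j+F_i)^{**}\supset F_i$ is itself not contained in $F_{j-1}$. So I may assume $x\neq 0$ and $F_i\subset F_{j-1}$; since $x\in F_{j-1}$ and $F_{j-1}$ is a subfield (by induction for $j\geq 2$, trivially for $j=1$, where $F_0=L$), this gives $xF_j\subset F_{j-1}$ and hence the nested chain $F_{j-1}^*\subset(xF_j+F_i)^*\subset Y^*$. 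The claim reduces to exhibiting the strict inclusion $F_{j-1}^*\subsetneq(xF_j+F_i)^*$.

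Two applications of Lemma~\ref{lem:bourbaki}, mirroring the computation in Lemma~\ref{lem:sum}, then give
$$\dim Y^*/F_{j-1}^* = \dim F_{j-1}S - \dim xF_jS, \qquad \dim Y^*/(xF_j+F_i)^* = \dim F_iS - \dim(F_iS\cap xF_jS).$$
For the right-hand quotient I reuse the standard bound from Lemma~\ref{lem:sum}: $x\in F_i\cap xF_j$ forces $xS\subset F_iS\cap xF_jS$, and $\partial F_i=\lambda_i\leq\dim S-1$ gives $\dim F_iS\leq\dim F_i+\dim S-1$, so $\dim Y^*/(xF_j+F_i)^*\leq\dim F_i-1$.

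The main obstacle is the complementary lower bound $\dim Y^*/F_{j-1}^*\geq\dim F_j$. For $j\geq 2$, both $xF_jS$ and $F_{j-1}S$ are $F_j$-modules (the former because $F_j\cdot xF_jS=xF_jS$, the latter because $F_{j-1}$ is a subfield containing $F_j$), so their $F$-dimensions are multiples of $\dim F_j$, and hence so is the difference. Strict positivity is all that remains: if $xF_jS=F_{j-1}S$ then taking saturations forces $xF_j=\widetilde{xF_j}=\widetilde{F_{j-1}}=F_{j-1}$, contradicting $\dim xF_j=\dim F_j<\dim F_{j-1}$ (the last inequality because $F_j\subsetneq F_{j-1}$ makes $F_{j-1}$ an $F_j$-vector space of $F_j$-dimension at least $2$). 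The case $j=1$---the ``in particular'' clause of the statement---reduces to a direct application of Lemma~\ref{lem:sum}, where the required hypothesis $\dim F_i\leq \dim(xF_1)^*$ follows from $\dim F_i\leq\dim F_1\leq\dim F_1^*$, with the final inequality holding because $F_1^*$ is itself a $1$-cell while $F_1$ is a $1$-kernel.

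Combining, the hypothesis $\dim F_i\leq\dim F_j$ together with the two bounds gives
$$\dim Y^*/(xF_j+F_i)^* \leq \dim F_i-1 < \dim F_j \leq \dim Y^*/F_{j-1}^*,$$
which, in view of the nesting $F_{j-1}^*\subset(xF_j+F_i)^*\subset Y^*$, forces $F_{j-1}^*\subsetneq(xF_j+F_i)^*$, equivalently $(xF_j+F_i)^{**}\subsetneq F_{j-1}$, as desired. The divisibility-and-strictness step producing $\dim Y^*/F_{j-1}^*\geq\dim F_j$ is the crux, being where the inductive structure---$F_j$ and $F_{j-1}$ being subfields with $F_j\subsetneq F_{j-1}$---is used in an essential way.
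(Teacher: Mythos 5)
Your proof is correct and rests on exactly the same two estimates as the paper's: the bound $\dim F_iS-\dim(F_iS\cap xF_jS)\le \dim F_i-1$ coming from $xS\subset xF_jS\cap F_iS$ and $\partial F_i\le \dim S-1$, and the divisibility argument giving $\dim F_{j-1}S-\dim F_jS\ge \dim F_j$. The only differences are cosmetic: you compare codimensions of duals inside $(xF_j)^*$ where the paper compares $\dim(xF_j+F_i)S$ with $\dim F_{j-1}S$ directly via $(xF_j+F_i)^{**}S=(xF_j+F_i)S$, and you split off $j=1$ through Lemma~\ref{lem:sum} where the paper folds it into the general computation.
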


\begin{proof}
  Let $x\in F_i$. We have
  \begin{align*}
  \dim (xF_j+F_i)S &\leq \dim(xF_jS+F_iS)\\
 &\leq \dim xF_jS +\dim F_iS -\dim (xF_jS\cap F_iS)\\
                   &\leq \dim F_jS +\dim F_iS -\dim S
  \end{align*}
  since $xS\subset (xF_jS\cap F_iS)$. From this we get
  $$\dim (xF_j+F_i)S< \dim F_jS + \dim F_i,$$
since $\dim F_iS \leq \dim F_i + \dim S -1$.
But $F_j\subsetneq F_{j-1}$ implies that $F_jS\subset F_{j-1}S$, whence
$F_jS\subsetneq F_{j-1}S$, since $F_jS=F_{j-1}S$ would contradict $F_j$ being saturated. Now since $F_jS$ and $F_{j-1}S$ are both stabilized by $F_j$ we obtain
  $$\dim F_jS\leq \dim F_{j-1}S -\dim F_{j},$$
  whence
  \begin{equation}
    \label{eq:xFj+Fi}
    \dim (xF_j+F_i)S< \dim F_{j-1}S -\dim F_j +\dim F_i\leq \dim F_{j-1}S,
  \end{equation}
by the hypothesis $\dim F_i\leq \dim F_j$.
To conclude, recall from Lemma~\ref{lem:satdual} {\rm (ii)} that
$(xF_j+F_i)^{**}S=(xF_j+F_i)S$, so that $(xF_j+F_i)^{**}=F_{j-1}$
would contradict \eqref{eq:xFj+Fi}.
\end{proof}

 \begin{lem}\label{claim4}
 $F_i\subset F_1$.
 \end{lem}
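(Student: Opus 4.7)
The strategy is to prove $F_i\subset F_1$ by contradiction: I will assume that some nonzero $x\in F_i$ lies outside $F_1$, and derive from this that $F_{i-1}F_i\subset F_i$, contradicting Lemma~\ref{claim1}. The mechanism is, for each nonzero $x\in F_i$, a case analysis on the $\SS$-class of the intersection $W(x):=xF_1\cap F_i$, driven by the submodularity inequality applied to $xF_1$ and $F_i$.

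Fix a nonzero $x\in F_i$. Since multiplication by $x$ is an $F$-linear isomorphism commuting with multiplication by $S$, the space $xF_1$ is itself a $1$-kernel: $\partial(xF_1)=\lambda_1$ and $\dim xF_1=\dim F_1$. The intersection $W(x)$ contains $x=x\cdot 1$, hence is nonzero and finite-dimensional, and therefore belongs to $\SS_{k(x)}$ for some $k(x)\geq 1$ by Lemma~\ref{lem:SS}(iv). Applying Proposition~\ref{prop:subm} to $xF_1$ and $F_i$, combined with Lemma~\ref{lem:SS}(v) and Lemma~\ref{lem:dual}(iii), yields
\[
\partial\bigl((xF_1+F_i)^{**}\bigr)+\lambda_{k(x)}\;\leq\;\lambda_1+\lambda_i.
\]
Provided $(xF_1+F_i)^{**}\neq L$, its boundary is some $\lambda_\ell$ with $\ell\geq 1$, and this forces $k(x)\leq i$. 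This non-triviality is exactly what Lemma~\ref{lem:referee*} (with $j=1$) provides under the hypothesis $\dim F_i\leq\dim F_1$; in the complementary regime I would invoke Lemma~\ref{lem:sum} instead, exploiting the bound $\dim F_i\leq\dim F_i^*$ that follows from $i$-kernel minimality of $F_i$.

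With $k(x)\leq i$ secured, the dichotomy is as follows. If $k(x)=i$, then $W(x)\in\SS_i$ is contained in the $i$-kernel $F_i$, so by kernel minimality $W(x)=F_i$; hence $F_i\subset xF_1$, and applying this to $1\in F_i$ gives $x\in F_1$. If $1\leq k(x)\leq i-1$, then $k(x)\in J$ by the minimality of $i$ in $\overline{J}$, so $F_{k(x)}$ stabilizes $W(x)$, and from $x\in W(x)$ we get $xF_{k(x)}\subset W(x)\subset F_i$; the chain hypothesis $F_{i-1}\subset F_{k(x)}$ then yields $xF_{i-1}\subset F_i$. The same argument applied to any $x\in F_1\cap F_i$ (where $xF_1=F_1$ and $W(x)=F_1\cap F_i$) produces either $F_i\subset F_1$ outright (case $k=i$) or $xF_{i-1}\subset F_i$ (case $k<i$).

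Under the contradiction hypothesis $F_i\not\subset F_1$, the option $k(x)=i$ is globally forbidden (since it would place $x$ in $F_1$), so every nonzero $x\in F_i$ satisfies $xF_{i-1}\subset F_i$; summing over $x$ yields $F_iF_{i-1}\subset F_i$, i.e., $F_{i-1}$ stabilizes $F_i$, contradicting Lemma~\ref{claim1}. The principal obstacle I anticipate is the uniform verification that $(xF_1+F_i)^{**}\neq L$ for every relevant $x$: Lemma~\ref{lem:referee*} demands $\dim F_i\leq\dim F_1$, while Lemma~\ref{lem:sum} requires a compatible bound on $\dim F_1^*$ or $\dim F_i^*$; reconciling these two regimes without introducing circularity is the delicate technical part of the argument.
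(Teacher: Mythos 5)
Your argument is correct and follows essentially the same route as the paper: submodularity applied to $xF_1$ and $F_i$, with the non-triviality of $(xF_1+F_i)^{**}$ secured by Lemma~\ref{lem:referee*} when $\dim F_i\leq\dim F_1$ and by Lemma~\ref{lem:sum} (via $\dim F_1<\dim F_i\leq\dim F_i^*$) otherwise, followed by kernel minimality to force the intersection to be an $i$--cell. The paper merely organizes the quantifiers differently --- it fixes at the outset the witness $x$ with $xF_{i-1}\not\subset F_i$ supplied by Lemma~\ref{claim1}, whereas you run the dichotomy over all $x$ and invoke Lemma~\ref{claim1} at the end; note only that the correct reason the case $k(x)=i$ is excluded under your contradiction hypothesis is that it yields $F_i\subset xF_1=F_1$ outright, not merely that it places the particular $x$ in $F_1$.
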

\begin{proof}
  By Lemma~\ref{claim1}, there exists $x\in F_i$ such that
  $F_{i-1}x\not\subset F_i$.
  We have
  \begin{equation}
    \label{eq:xF1+Fi}
    \partial (xF_{1}+ F_i)+\partial (xF_{1}\cap F_i)\le \lambda_{1}+\lambda_i.
  \end{equation}
Suppose that $\dim F_i\leq \dim F_1$. Then Lemma~\ref{lem:referee*} implies that $\widetilde{xF_1+F_i}=(xF_1+F_i)^{**}\neq L$, so that
$\partial (xF_1+F_i)\ge \partial (\widetilde{xF_1+F_i})\ge \lambda_1$.
If $\dim F_i\leq \dim F_1$ does not hold, then $\dim xF_1<\dim F_i\leq \dim F_i^*$, and
 Lemma~\ref{lem:sum} implies
 $\partial (xF_1+F_i)\ge \partial (\widetilde{xF_1+F_i})\ge \lambda_1$ again.
In both cases, we obtain from \eqref{eq:xF1+Fi} that
$\partial (xF_1\cap F_i)\le \lambda_i$.
Now $xF_1\cap F_i$ is saturated and contains $x$, but not $F_{i-1}x$
and  not $F_jx$ either for $j\leq i-1$ since $F_j\supset F_{i-1}$.
Since we know that $j$--cells are stabilized by $F_j$ for all $j\leq i-1$,
we obtain that $xF_1\cap F_i$ cannot be a $j$--cell for all $j\leq i-1$.
This implies in particular that 
$\partial(xF_1\cap F_i)\neq \lambda_j$ for all $j\leq i-1$. Hence,
$\partial(xF_1\cap F_i)=\lambda_i$ which implies that $F_i\subset
xF_1$ by minimality of $F_i$ in $\SS_i$.
Since $1\in F_i$ we must have $1\in xF_1$ which implies $xF_1=F_1$.
 \end{proof}

 \begin{prop}\label{claim5}
 For every $j<i$ we have $F_i\subset F_j$.
 \end{prop}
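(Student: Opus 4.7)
The plan is to proceed by induction on $j$ from $1$ to $i-1$, with the base case $j=1$ being exactly Lemma \ref{claim4}. For the inductive step, assume $F_i\subset F_{j-1}$ and prove $F_i\subset F_j$; note that $F_j$ is already a subfield because $j<i$ implies $j\in J$ by the minimality of $i$. The strategy is to mirror the proof of Lemma \ref{claim4}, with $F_j$ now playing the role of $F_1$.

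Using Lemma \ref{claim1}, pick $x\in F_i$ with $F_{i-1}x\not\subset F_i$, and set $Y:=xF_j\cap F_i$ and $Z:=(xF_j+F_i)^{**}$, both in $\SS$ by Lemma \ref{lem:SS}\rm{(iv)},\rm{(v)}. Submodularity (Proposition \ref{prop:subm}) combined with $\partial Z\leq\partial(xF_j+F_i)$ yields $\partial Z+\partial Y\leq\lambda_j+\lambda_i$. Granted the key lower bound $\partial Z\geq\lambda_j$ (discussed in the next paragraph), we obtain $\partial Y\leq\lambda_i$, so $Y\in\SS_k$ for some $k\leq i$. A stabilisation argument eliminates all $k\leq i-1$: such a $Y$ would be stabilised by $F_k\supset F_{i-1}$ (from the chain in $J$), forcing $F_{i-1}x\subset F_kx\subset Y\subset F_i$, contradicting the choice of $x$. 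Hence $Y\in\SS_i$ and, being contained in $F_i$, minimality of $F_i$ in $\SS_i$ gives $Y=F_i$, that is, $F_i\subset xF_j$. Since $1\in F_i\subset xF_j$ and $F_j$ is a subfield, we conclude $x\in F_j$, hence $xF_j=F_j$ and $F_i\subset F_j$.

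The main obstacle is the lower bound $\partial Z\geq\lambda_j$. The inductive hypothesis $F_i\subset F_{j-1}$ and the identity $F_{j-1}F_j=F_{j-1}$ (from $F_j\subset F_{j-1}$ and $F_{j-1}$ being a subfield) give $xF_j+F_i\subset F_{j-1}$, hence $Z\subset F_{j-1}^{**}=F_{j-1}$ by Lemma \ref{lem:SS}\rm{(ii)}; moreover $Z\neq\{0\}$ since $x\in Z$, and $Z\neq L$ since $\lambda_{j-1}>0$ forces $F_{j-1}\neq L$. So $Z\in\SS_k$ for some $k\geq 1$. If $1\leq k\leq j-1$, then stabilisation of $Z$ by $F_k$, combined with $1\in F_i\subset Z$, gives $F_k\subset Z$; pairing this with $Z\subset F_{j-1}\subset F_k$ yields $Z=F_k=F_{j-1}$, so by strictness of the chain $F_1\supsetneq\cdots\supsetneq F_{j-1}$ we must have $k=j-1$ and $Z=F_{j-1}$. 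Lemma \ref{lem:referee*} excludes $Z=F_{j-1}$ whenever $\dim F_i\leq\dim F_j$, closing that sub-case. The delicate remaining sub-case $\dim F_i>\dim F_j$ must be ruled out separately, since the target inclusion $F_i\subset F_j$ itself forces $\dim F_i\leq\dim F_j$; I expect to handle it via a sharpening of the dimension count behind Lemma \ref{lem:referee*} that exploits the $F_j$-module structure of $F_{j-1}S$ (giving $\dim F_{j-1}S-\dim F_jS\geq\dim F_j$), together with Lemma \ref{lem:sum} applied to $X=xF_j$, $Y=F_i$ to reconfirm $Z\neq L$.
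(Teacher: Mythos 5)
Your inductive framework and the second half of your argument (picking $x\in F_i$ via Lemma~\ref{claim1}, bounding $\partial(xF_j\cap F_i)$ by submodularity, eliminating $\SS_k$ for $k<i$ by the stabiliser argument, and concluding $F_i\subset xF_j=F_j$ by minimality of the $i$--kernel) match the paper's proof. But there is a genuine gap exactly where you flag one: the sub-case $\dim F_i>\dim F_j$, and your proposed repair cannot work. The inequality $\dim F_{j-1}S-\dim F_jS\geq\dim F_j$ is already the one used in the proof of Lemma~\ref{lem:referee*}; the chain of estimates there ends with $\dim(xF_j+F_i)S<\dim F_{j-1}S-\dim F_j+\dim F_i$, and when $\dim F_i>\dim F_j$ the right-hand side exceeds $\dim F_{j-1}S$, so nothing excludes $Z=(xF_j+F_i)^{**}=F_{j-1}$. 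In that event submodularity only gives $\partial(xF_j\cap F_i)\leq\lambda_j+\lambda_i-\lambda_{j-1}$, which is useless. Lemma~\ref{lem:sum} is of no help either: it only yields $Z\neq L$, which you already have from $Z\subset F_{j-1}$.

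The missing idea is the first half of the paper's proof, which disposes of $\dim F_i>\dim F_j$ by a \emph{duality} argument before running the argument you describe. Assume $\dim F_i>\dim F_j$ and apply submodularity to $xF_j$ and $F_i^*$ for $x\in F_i^*$. The induction hypothesis $F_i\subsetneq F_{j-1}$ gives $(xF_j+F_i^*)^{**}\supsetneq F_{j-1}^*$, hence $\bigl((xF_j+F_i^*)^{**}\bigr)^*\subsetneq F_{j-1}$; by minimality of kernels (and Lemma~\ref{lem:sum} to rule out $L$) this forces $\partial(xF_j+F_i^*)^{**}\geq\lambda_j$ and so $\partial(xF_j\cap F_i^*)\leq\lambda_i$. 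Since $\dim(xF_j\cap F_i^*)\leq\dim F_j<\dim F_i$, this space cannot be an $i$--cell, hence lies in some $\SS_k$ with $k<i$ and is stabilised by $F_{i-1}$. Letting $x$ range over $F_i^*$ shows $F_i^*$ is stabilised by $F_{i-1}$, contradicting Lemma~\ref{claim1} since the dual of an $i$--cell is an $i$--cell. Note that passing to the dual is what turns the troublesome containment $Z\subset F_{j-1}$ into a \emph{strict} containment $Z^*\subsetneq F_{j-1}$, so the $(j-1)$--cell obstruction that blocks your primal argument disappears. Without this step your proof does not close.
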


 \begin{proof}
   We prove this by induction on $j$. Lemma~\ref{claim4} gives the result
for $j=1$, so suppose we already have $F_i\subset F_{j-1}$ and let us
prove $F_i\subset F_j$.
Suppose first that $\dim F_i>\dim F_j$.
Let $x\in F_i^*$ and 
consider $Z:=(xF_j+F_i^*)^{**}$, which
belongs to $\SS$ by Lemma \ref{lem:SS} \rm{(v)}. We have, by
Proposition~\ref{prop:subm} and Lemma~\ref{lem:dual},
$$
\partial Z +\partial(xF_j\cap F_i^*)\leq  \partial (xF_j+F_i^*)+\partial (xF_j\cap F_i^*)\le \lambda_j+\lambda_i.
 $$
By Lemma~\ref{lem:sum}, since we assume that $\dim F_i^{**}=\dim F_i\geq
\dim F_j$, we have $Z\neq L$, and by the
induction
hypothesis $F_i\subsetneq F_{j-1}$, we have $F_i^*\supsetneq
F_{j-1}^*$, so that $Z\supsetneq F_{j-1}^*$ and $Z^*\subsetneq F_{j-1}$. 
Therefore $Z^*$ is not a $(j-1)$--cell
by the minimality of $\dim F_{j-1}$ in $\SS_{j-1}$,
from which it is 
not a $k$--cell for $k<j$ in view of $F_1\supset \cdots \supset F_{i-1}$.
Since the dual of a $k$--cell is again a $k$--cell, this implies that
$Z$ is also not a $k$--cell for all $k<j$.
Therefore $\partial Z\geq
\lambda_j$, which implies $\partial (xF_j\cap F_i^*)\le \lambda_i$.
Now, by the hypothesis $\dim F_i > \dim F_j$, we have that $\dim (xF_j\cap
F_i^*)< \dim F_i$ and $xF_j\cap F_i^*$ cannot be an $i$--cell.
Therefore it is in $\SS_k$ for some $k<i$, which, by definition of $i$,
implies that it is stabilized by $F_k$ and hence by $F_{i-1}$.
By applying this to $xF_j$ for every $x\in F_i^*$, we get that the
whole of
$F_i^*$ is stabilized by $F_{i-1}$: but this contradicts
Lemma~\ref{claim1}.
Hence
\begin{equation}
  \label{eq:dimNi<dimNj}
  \dim F_i\leq \dim F_j.
\end{equation}

Next, consider $x\in F_i$. Suppose
that for every $x\in F_i$, $x\neq 0$, $xF_j\cap F_i$ is in $\SS_k$
for some $k<i$. Then every $xF_j\cap F_i$ is stabilized by $F_{i-1}$ and
$F_{i-1}F_i=F_i$ which contradicts Lemma~\ref{claim1}. Therefore there
exists $x\in F_i$ such that $xF_j\cap F_i$ is not in $\SS_k$ for
every $k<i$. 
This choice of $x$ ensures that $\partial (xF_j\cap F_i)\geq \lambda_i$.
If we can show that $\partial (xF_j\cap F_i)= \lambda_i$, we will
conclude
that $F_i\subset xF_j$ by the minimality of the $i$--kernel $F_i$, and since $1\in F_i$ we will have $1\in xF_j$
and $xF_j=F_j$, so that $F_i\subset F_j$ and we will be done.
Consider now
$$
  \partial (xF_j+F_i)+\partial (xF_j\cap F_i)\le \lambda_j+\lambda_i.
$$
This
inequality will yield $\partial (xF_j\cap F_i)\le
\lambda_i$ and the desired result if we can show that
\begin{equation}
  \label{eq:lambdaj}
\partial (xF_j+F_i) \geq \lambda_j.
\end{equation}
Inequality \eqref{eq:lambdaj} will in turn follow
if we show that $(xF_j+F_i)^{**}$ is not
a $k$--cell for $k<j$. We have $xF_j\subset
xF_{j-1}$ and, by the induction
hypothesis on $j$, we have $F_i\subset F_{j-1}$, whence $xF_{j-1}=F_{j-1}$
since $x\in F_i$. Therefore $xF_j+F_i\subset F_{j-1}$. 
From this we derive $(xF_j+F_i)^{**}\subset F_{j-1}^{**}$ and
$(xF_j+F_i)^{**}\subset F_{j-1}$ 
by Lemma~\ref{lem:SS} {\rm (ii)}. Since $F_{j-1}\subsetneq F_k$ for all $k<j-1$, we have, by minimality of $F_k$ in $\SS_k$, that $(xF_j+F_i)^{**}$ cannot be a $k$--cell for $k<j-1$. By Lemma~\ref{lem:referee*} together with \eqref{eq:dimNi<dimNj} we have that $(xF_j+F_i)^{**}$ cannot be a $(j-1)$--cell either
and we are finished. 
 \end{proof}

\begin{prop}\label{claim6}
 $F_i$ is a subfield.
 \end{prop}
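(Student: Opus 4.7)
The plan is to show that $xF_i=F_i$ for every non-zero $x\in F_i$; since $1\in F_i$ and $F_i$ has finite $F$-dimension, this implies $F_iF_i\subset F_i$ and hence that $F_i$ is a subfield of~$L$. Fix $x\in F_i\setminus\{0\}$. By Proposition~\ref{claim5}, $F_i\subset F_{i-1}$, and $F_{i-1}$ is a subfield by the inductive hypothesis, so $x$ is invertible in~$L$; consequently $xF_i$ is another $i$-kernel, since multiplication by $x$ preserves $F$-dimension, boundary, and saturation. By the minimality of $\dim F_i$ among $i$-cells, it suffices to show that $xF_i\cap F_i$ is itself an $i$-cell, for then $xF_i\cap F_i=F_i$, so $F_i\subset xF_i$, whence $xF_i=F_i$ by equality of dimensions.

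Applying Proposition~\ref{prop:subm} to $xF_i$ and $F_i$ yields
$$\partial\bigl((xF_i+F_i)^{**}\bigr)+\partial(xF_i\cap F_i)\le 2\lambda_i.$$
By Lemma~\ref{lem:SS}(iv), $xF_i\cap F_i\in\SS$; being non-zero (it contains $x$) and strictly contained in $L$, it lies in $\SS_k$ for some $k\ge 1$. \emph{Case $k<i$:} Then $k\in J$, so $xF_i\cap F_i$ is stabilized by $F_k\supset F_{i-1}$. Since $x\in xF_i\cap F_i$, this yields $F_{i-1}x\subset F_i$; but $\dim F_{i-1}x=\dim F_{i-1}>\dim F_i$ by the strictness of the chain in Proposition~\ref{claim5}, a contradiction.

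\emph{Case $k>i$:} Submodularity forces $\partial((xF_i+F_i)^{**})<\lambda_i$. Lemma~\ref{lem:sum} with $X=xF_i$ and $Y=F_i$ (all hypotheses hold, in particular $\dim xF_i=\dim F_i\le\dim F_i^*$ by minimality of $\dim F_i$ among $i$-cells) rules out $(xF_i+F_i)^{**}=L$, so $(xF_i+F_i)^{**}\in\SS_m$ for some $1\le m<i$. As an $m$-cell with $m\in J$, it is stabilized by $F_{i-1}$. On the other hand $xF_i+F_i\subset F_{i-1}$ (since $F_{i-1}$ is a field containing $x$ and $F_i$), and by saturation $(xF_i+F_i)^{**}\subset F_{i-1}$; strictness of the chain $F_1\supsetneq\cdots\supsetneq F_{i-1}$ rules out $m<i-1$, so $m=i-1$ and minimality of $\dim F_{i-1}$ forces $(xF_i+F_i)^{**}=F_{i-1}$. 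Hence $(xF_i+F_i)S=F_{i-1}S$, and combined with the bound $\dim(xF_iS\cap F_iS)\ge\dim S$ coming from $xS\subset xF_iS\cap F_iS$, an adaptation of the dimensional argument used in the proof of Lemma~\ref{lem:referee*} produces the required contradiction.

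The hardest step will be the concluding part of Case $k>i$: the direct analog of Lemma~\ref{lem:referee*} for $j=i$ would rely on the bound $\dim F_iS\le\dim F_{i-1}S-\dim F_i$, whose proof would require $F_i$-stability of $F_iS$—but that is equivalent to $F_iF_i\subset F_i$, the very statement being proved. Breaking this apparent circularity presumably requires either a finer count exploiting the specific identification $(xF_i+F_i)^{**}=F_{i-1}$, or passage to the dual via $(xF_i)^*=x^{-1}F_i^*$ to shift the minimality argument to the dual side.
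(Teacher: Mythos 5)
Your reduction to showing that $xF_i\cap F_i$ is an $i$--cell is sound, and your Case $k<i$ is correct: stabilization by $F_k\supset F_{i-1}$ together with $x\in xF_i\cap F_i$ forces $F_{i-1}x\subset F_i$, impossible since $\dim F_{i-1}>\dim F_i$. The problem is Case $k>i$, and the gap you flag there is real and, as far as I can see, not closable by the route you sketch. After the (correct) identification $(xF_i+F_i)^{**}=F_{i-1}$, the only dimensional information available is $\dim F_{i-1}S=\dim(xF_i+F_i)S\le 2\dim F_iS-\dim S$ together with $\dim F_{i-1}S=\dim F_{i-1}+\lambda_{i-1}$; these are compatible (e.g.\ they reduce to $\dim F_{i-1}+\lambda_{i-1}\le 2\dim F_i+2\lambda_i-\dim S$, which has no reason to fail). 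The inequality that would kill this case, $\dim F_{i-1}S\ge\dim F_iS+\dim F_i$, requires $F_iS$ to be an $F_i$--module, i.e.\ exactly the statement being proved --- the circularity you yourself identified. So the proposal does not constitute a proof.

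The idea you are missing is the one the paper isolates in the remark following its proof: the duality $*$ depends on a choice of linear form $\sigma$, and one may choose $\sigma$ \emph{adapted to the offending element} $x$. Concretely, since $F_i$ is saturated it suffices to show $xF_iS\subset F_iS$; if not, pick $\sigma$ with $\sigma(F_iS)=0$ but $\sigma(xF_iS)\neq 0$, so that for this duality $1\in F_i^*$ while $x\notin F_i^*$. One then applies submodularity not to the pair $(xF_i,F_i)$ but to $(F_i,F_i^*)$: setting $Z=(F_i+F_i^*)^{**}$, one has $\partial Z+\partial(F_i\cap F_i^*)\le 2\lambda_i$; $Z\neq L$ by Lemma~\ref{lem:sum}, and $Z$ cannot be a $j$--cell for $j<i$ because $Z^*\subsetneq F_j$ would violate the minimality of $F_j$ (here one uses $F_i\subsetneq F_j$, hence $F_i^*\supsetneq F_j^*$). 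Thus $\partial(F_i\cap F_i^*)\le\lambda_i$, and since $F_i\cap F_i^*$ is a nonzero saturated subspace of $F_i$, minimality of the kernels forces it to be an $i$--cell equal to $F_i$, i.e.\ $F_i\subset F_i^*$, contradicting $x\notin F_i^*$. This sidesteps the circular module argument entirely; your closing suggestion of ``passage to the dual'' points in the right direction but does not identify the essential step of choosing $\sigma$ to separate $xF_iS$ from $F_iS$.
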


 \begin{proof} Let $x\in F_i$, $x\neq 0$; our aim is to show  that $xF_i\subset
   F_i$ (whence $xF_i=F_i$ since $F_i$ is finite dimensional). 
Since $F_i$
is satured it is enough to show that $xF_iS\subset F_iS$. By
contradiction, if $xF_iS\not\subset F_iS$, then there
exists a linear form $\sigma$ such that $\sigma(F_iS)=0$ but
$\sigma(xF_iS)\neq 0$. This last condition translates to $x\notin
F_i^*$ where duality is related to this very choice of non-zero
linear form on $L$.
We would then have $1\in F_i^*$ and $F_i\not\subset F_i^*$. Let us show
now that this is not possible.

For $Z:=(F_i+F_i^*)^{**}$, we have:
$$
  \partial Z +\partial (F_i\cap F_i^*)\le \partial (F_i+F_i^*)+\partial (F_i\cap F_i^*)\le 2\lambda_i.
$$
Since we have proved that $F_i\subset F_j$ for all $j<i$, and these
inclusions are strict,  we have
$F_i^*\supsetneq F_j^*$, whence $Z\supsetneq F_j^*$. Note that
$Z\in\SS$ by Lemma~\ref{lem:SS} {\rm (v)}.
Since $F_j^*$ is a $j$--cell whose dual has minimum
dimension, $Z$ cannot be a $j$--cell for $1\leq j<i$, otherwise
$Z^*$ would also be a $j$--cell, and $Z^*\subsetneq F_j$ would contradict
the minimality of $F_j$ in $\SS_j$.
By Lemma~\ref{lem:sum} we also have
$Z\neq L$, so we conclude that
$$\partial Z \geq \lambda_i.$$
Hence $\partial (F_i\cap F_i^*)\leq \lambda_i$, which implies
$F_i\subset F_i^*$ since $F_i$ is an $i$--kernel and has smaller dimension than any
$j$--cell for $j<i$ by Proposition~\ref{claim5}. 
 \end{proof}

 We make the passing remark that the proof of Proposition~\ref{claim6}
 exploits the fact that many different linear forms $\sigma$ can be used
 to define duality: this breaks significantly from the additive setting
 where combinatorial duality is achieved through complementation and
 can therefore be defined only in a unique way.
 
\begin{prop}\label{claim7}
 Every $i$--cell is stabilized by $F_i$.
\end{prop}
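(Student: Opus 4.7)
The plan is to follow the template of Propositions~\ref{claim5} and~\ref{claim6}. Let $W$ be an $i$--cell; by Lemma~\ref{lem:subfield}, the conclusion for $W$ is equivalent to the conclusion for $W^*$, so I may assume $W$ is finite-dimensional. It will suffice to show $wF_i\subseteq W$ for every nonzero $w\in W$, for then $F_iW\subseteq W$. Fix such $w$ and set $A=(wF_i+W)^{**}$ and $B=wF_i\cap W$; both lie in $\SS$ by Lemma~\ref{lem:SS}\rm{(iv)--(v)}. Applying the submodular inequality of Proposition~\ref{prop:subm}, combined with Lemma~\ref{lem:dual}\rm{(iii)}, yields
\begin{equation*}
\partial A+\partial B\leq \partial(wF_i+W)+\partial(wF_i\cap W)\leq \partial wF_i+\partial W=2\lambda_i.
\end{equation*}

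I first establish $\partial B\geq \lambda_i$: the space $B$ is nonzero (it contains $w$) and satisfies $\dim B\leq \dim wF_i=\dim F_i$. For $k<i$, Proposition~\ref{claim5} gives $F_i\subsetneq F_k$, whence $\dim F_k>\dim F_i$; the minimality of $F_k$ in $\SS_k$ then precludes $B$ from being a $k$--cell for any $k<i$, so $B\in \SS_k$ with $k\geq i$. Next, Lemma~\ref{lem:sum} applied with $X=wF_i$ and $Y=W$ (using $\partial wF_i=\lambda_i\leq \dim S-1$, $w\in wF_i\cap W$, and $\dim wF_i=\dim F_i\leq \dim W^*$ from the minimality of $F_i$ among $i$--cells) shows $A\neq L$, and since $A\supseteq W\neq\{0\}$, one has $A\in\SS_\ell$ for some $\ell\geq 1$.

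The crucial step is to upgrade this to $\ell\geq i$. Once achieved, the submodular bound forces $\partial A=\partial B=\lambda_i$, so $B$ is an $i$--cell; minimality of $F_i$ in $\SS_i$ then gives $\dim B\geq\dim F_i=\dim wF_i$, so $B=wF_i$ and $wF_i\subseteq W$, as required.

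The hard part is ruling out $A\in\SS_j$ with $j<i$. Assuming for contradiction that $A$ is a $j$--cell with $j<i$, the inductive hypothesis (Proposition~\ref{claim7} for all indices smaller than $i$, which is available by the minimality of $i$ in $\overline{J}$) yields $F_jA=A$, and via $F_i\subsetneq F_j$ (Proposition~\ref{claim5}) also $F_iA=A$. Since $wF_i\subseteq A$ and $A$ is $F_j$--stable, $wF_j=F_j\cdot wF_i\subseteq A$, and as $wF_i\subseteq wF_j$ a double inclusion gives $A=(wF_j+W)^{**}$. The submodular inequality for $(wF_j,W)$ then gives $\partial(wF_j\cap W)\leq \lambda_i$, and the same cell-index analysis places $wF_j\cap W$ in $\SS_{k'}$ with $j<k'\leq i$, the case $k'=j$ being excluded because it would imply $wF_j\subseteq W$ and hence $wF_i\subseteq W$, contradicting the choice of $w$. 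I would then either iterate the procedure with $k'$ in place of $j$ (each step strictly increasing the index), or, in the boundary case $k'=i$, induct on $\dim W$ and invoke Proposition~\ref{claim7} on the strictly smaller $i$--cell $wF_j\cap W$ to obtain $wF_i\subseteq wF_j\cap W\subseteq W$, contradicting the choice of $w$ and thereby closing the argument.
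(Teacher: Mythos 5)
Your setup is sound and matches the paper's own opening moves, up to the point you yourself flag as crucial: showing that $A=(wF_i+W)^{**}$ cannot lie in $\SS_j$ for $j<i$. All of the difficulty is concentrated there, and your proposed resolution does not close it. A minor remark first: when $j<k'<i$ no iteration is needed, because the minimality of $i$ in $\overline{J}$ already makes the $k'$--cell $wF_j\cap W$ stabilized by $F_{k'}\supseteq F_i$; since it contains $w$, this gives $wF_i\subseteq W$ at once, a contradiction. So the only surviving case is $k'=i$, where $W':=wF_j\cap W$ is an $i$--cell containing $w$ but not $wF_i$. Your descent (induction on $\dim W$) requires $W'\subsetneq W$, i.e.\ $W\not\subseteq wF_j$. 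Nothing in your argument rules out $W\subseteq wF_j$: in that case $W'=W$, $A=wF_j$, and there is no smaller cell to pass to. This configuration --- an $i$--cell not stabilized by $F_i$ trapped inside a translate $wF_j$ of a $j$--kernel --- is precisely the hard case, and it is where the paper's proof spends essentially all of its effort.

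The paper resolves it by choosing $j$ \emph{maximal} such that some $i$--cell $X$ not stabilized by $F_i$ is contained in $xF_j$ for a witness $x$ (your first reduction shows such a $j\geq 1$ exists), then proving the dimension inequality $\dim X\geq\dim F_{j+1}$ via a count along the tower, $\dim F_jS\geq\dim F_{j+1}S+\dim F_{j+1}\geq\dim F_iS+\dim F_i+\dim F_{j+1}$, and finally switching to the dual: it pairs $X^*$ with a translate $yF_{j+1}$ of the $(j+1)$--kernel, where the strict inclusion $(X^*+yF_{j+1})^*\subsetneq xF_j$ together with Lemma~\ref{lem:sum} (which needs the dimension inequality) forces $\partial(X^*+yF_{j+1})\geq\lambda_{j+1}$, and submodularity then yields an $i$--cell not stabilized by $F_i$ inside $yF_{j+1}$, contradicting the maximality of $j$. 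None of this machinery --- the maximal choice of $j$, the dimension count, or the passage to $X^*$ --- appears in your sketch, so the proof as proposed has a genuine gap.
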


 \begin{proof}
Let us suppose that there exists an $i$--cell $X$  that is not stabilized
by $F_i$ and work towards a contradiction.
Without loss of generality we may assume that $X$ is of finite
dimension by Corollary~\ref{cor:subfield}.

The proof strategy consists in
constructing smaller and smaller $i$--cells that are not
stabilized by $F_i$ until we eventually exhibit one that is included
in $xF_i$ for some $x$, which will yield a contradiction.

That $X$ is not stabilized by $F_i$ 
means there exists $x\in X$ with $xF_i\not\subset X$. 

We first argue that there exists $k$, $1\leq k\leq i-1$, such that
$X\cap xF_k$ is an $i$--cell not stabilized by $F_i$.

We have 
$$\partial(X+xF_i) + \partial(X\cap xF_i)\leq 2\lambda_i.$$
Since $X\cap xF_i\subsetneq xF_i$ we have $\partial (X\cap xF_i)>
\lambda_i$. This is because $X\cap xF_i$ is a saturated set whose dimension is smaller than that of any $j$-cell for $1\leq j\leq i$ by Proposition~\ref{claim5}.
Therefore $\partial(X+xF_i)<\lambda_i$. Furthermore,
by Lemma \ref{lem:sum}, $(X+xF_i)^{**}\neq L$, so that
$\partial(X+xF_i)^{**} = \lambda_k$ for some $1\leq k\leq i-1$.
Now, since $(X+xF_i)^{**}\in \SS_k$ (by Lemma~\ref{lem:SS} {\rm (v)}), and $k<i$, we know that
$(X+xF_i)^{**}=\widetilde{X+xF_i}$ is stabilized by $F_k$, whence
$$(X+xF_i)^{**} = (X+xF_k)^{**}$$
and $\partial (X+xF_k)^{**}=\lambda_k$.
We now write
$$\partial (X+xF_k)^{**} + \partial(X\cap xF_k)\leq
\lambda_k+\lambda_i$$
from which we get $\partial (X\cap xF_k)\leq \lambda_i$,
which implies $\partial (X\cap xF_k) = \lambda_i$, since
otherwise $X\cap xF_k$ is an $\ell$--cell for some $\ell <i$, and therefore
stabilized by $F_\ell$, and hence by $F_i$, which contradicts our
assumption on $x$.

The space $X\cap xF_k$ is therefore an $i$-cell that is not stabilized by $F_i$,
and we may therefore replace $X$ by an $i$-cell which is included in some kernel.
Specifically, let $j\leq i$ be the largest integer such that
there exists an $i$--cell $X$ not stabilized by $F_i$ and included in
$xF_j$ for some~$x\in X$ with $xF_i\not\subset X$. Clearly we can only have $j\leq i-1$ since there
is no $i$--cell  included in but not equal to $xF_i$.
We have just shown $j\geq 1$. 
Furthermore, we have also shown that
$$\partial(X+xF_i)^{**} =\lambda_k$$
with $k\leq j$; otherwise, repeating the above argument, there would exist a new $i$--cell of the form
$X\cap xF_k$ that is not stabilized by $F_i$, which would contradict our definition of $j$.

Since $X+xF_i \subset xF_j$ and $xF_j$ is
saturated, we cannot have $(X+xF_i)^{**}\in \SS_k$ for $k<j$ as this would
imply $(X+xF_i)^{**}=\widetilde{X+xF_i}\subset xF_j$, meaning $xF_j$ contains a $k$--cell, which is not possible as a $j$--kernel is strictly smaller than a $k$--cell for $k<j$ in view of Proposition~\ref{claim5} and the minimality of $\dim F_k$ over all $k$--cells.
Therefore, we have just shown that 
\begin{equation}
  \label{eq:inSj}
  (X+xF_i)^{**}\in \SS_j.
\end{equation}

Our next objective is to construct an $i$--cell that is not stabilized
by $F_i$ and included in a $(j+1)$--kernel $yF_{j+1}$, which will contradict the
definition of $j$ and prove the proposition. For this we will need to
apply Lemma~\ref{lem:sum} to the space $X^*$ and to the
$(j+1)$--kernel, for which we need the condition 
\begin{equation}
  \label{eq:dimX}
  \dim X \geq \dim
F_{j+1}
\end{equation}
 which we now prove. We assume $j<i-1$, since
 if $j=i-1$ \eqref{eq:dimX} is immediate as $X$ is an $i$--cell.
 
From $F_{j+1}S\subsetneq F_jS$ (the $F_j$ are saturated sets) we have,
since the $F_j$ are subfields,
\begin{equation}
  \label{eq:FjS}
  \dim F_jS \geq \dim F_{j+1}S+\dim F_{j+1}.
\end{equation}
For the same reason, since $F_iS\subsetneq F_{j+1}S$, we have
\begin{equation}
  \label{eq:Fj+1S}
  \dim F_{j+1}S \geq \dim F_iS+\dim F_i.
\end{equation}
Now from
$$\partial(X+xF_i) + \partial(X\cap xF_i) \leq 2\lambda_i$$
we have $\partial(X+xF_i)\leq \lambda_i$, i.e.,
$$\dim(X+xF_i)S - \dim(X+xF_i) \leq \lambda_i$$
meaning
\begin{align}
  \dim(X+xF_i)S &\leq \lambda_i + \dim(X+xF_i)\nonumber\\
                &< \lambda_i + \dim X + \dim F_i.\label{eq:lambdai}
\end{align}
From \eqref{eq:inSj}, $X\subset xF_j$, and the fact that
$j$--cells are stabilized by $F_j$, 
we have $(X+xF_i)S = (X+xF_j)S = xF_jS$. Writing $\lambda_i = \dim
F_iS-\dim F_i$, we get from \eqref{eq:lambdai}
\begin{align*}
  \dim F_jS =\dim(X+xF_i) &< \dim F_iS - \dim F_i +\dim X + \dim F_i\\
            &\leq \dim F_iS +\dim X\\
            &< \dim S + \dim F_i + \dim X.
\end{align*}
But on the other hand, \eqref{eq:FjS} and \eqref{eq:Fj+1S} imply
$$\dim F_jS \geq \dim F_iS + \dim F_i + \dim F_{j+1}\geq \dim S +\dim
F_i + \dim F_{j+1},$$
whence $\dim F_{j+1} < \dim X$.

Now that we have proved \eqref{eq:dimX}, we
are ready to construct an $i$--cell that is not stabilized
by $F_i$ and included in a $(j+1)$--kernel $yF_{j+1}$.

Since $X$ is assumed not to be stabilized by $F_i$, $X^*$ is not
stabilized by $F_i$ either by Corollary~\ref{cor:subfield}. Therefore
there exists $y\in X^*$ such that $yF_i\not\subset X^*$. We write
\begin{equation}
  \label{eq:X*andFj+1}
  \partial(X^*+yF_{j+1}) + \partial(X^*\cap yF_{j+1})\leq
\lambda_{j+1}+\lambda_i.
\end{equation}
By the hypothesis $X\subsetneq xF_j$, we have 
$X^*+yF_{j+1}\supset X^*\supsetneq (xF_j)^*$, and
$(X^*+yF_{j+1})^*\subsetneq xF_j$. Therefore,
$(X^*+yF_{j+1})^*$ and $(X^*+yF_{j+1})^{**}$ do not belong
to $\SS_k$
for any $1\leq k\leq j$. Now \eqref{eq:dimX}
 and Lemma~\ref{lem:sum} imply that
$(X^*+yF_{j+1})^{**}\neq L$. Therefore,
$$\partial(X^*+yF_{j+1})\geq \lambda_{j+1}.$$
Together with \eqref{eq:X*andFj+1} this implies
$$\partial(X^*\cap yF_{j+1}) \leq \lambda_i.$$
If $X^*\cap yF_{j+1}$ were a $k$--cell for $k<i$, it would be
stabilized by $F_k$ and hence $F_i$: since this is assumed not to be
the case, we have that $X^*\cap yF_{j+1}$ must be an $i$--cell.
As announced, we have constructed an $i$--cell that is included
in a $(j+1)$--kernel $yF_{j+1}$, which contradicts the definition of $j$.
 \end{proof}

Propositions \ref{claim5}, \ref{claim6} and \ref{claim7} imply that $i=n$
and
prove Theorem~\ref{thm:linbal}.

\section{Proof of Theorem~\ref{thm:kneser_onesided}}\label{sec:proofmain}
By replacing $S$ if need be by $s^{-1}S$ for some $s\in S$, we may always suppose $1\in S$. 
Suppose first $L=F(S)$.
If $T=F(S)$ is the only saturated subspace of finite dimension such that 
\begin{equation}
  \label{eq:ST}
  \dim ST <\dim S+\dim T -1,
\end{equation}
then the theorem holds with $K=F(S)$. Otherwise,
if there exists a finite-dimensional subspace $T$ satisfying
\eqref{eq:ST} and such that $ST\neq F(S)$, 
then we are in the conditions of Theorem~\ref{thm:linbal}. 
In this case $\tilde{T}$ is a $k$--cell for some $1\leq k\leq n-1$, and
in particular is stabilized by $F_k$, implying that
 $ST=S\tilde{T}$ is stabilised by $F_k$ as well. Since $F(S)$ and also
every space $F_k$ contain $F_{n-1}$, the conclusion of the theorem holds with $K=F_{n-1}$.

Consider now the case $L\supsetneq F(S)$. 
Let $T$ be a subspace satisfying \eqref{eq:ST}.

  Let $t\in T, t\neq 0$. Let $T_t= tF(S)\cap T$: since $T_t$ is
an $F$-vector space, we may write $T=T_t\oplus T'$ for some subspace
$T'$ of $T$ with $T'\cap T_t=\{0\}$. 
Note that this implies $T'\cap tF(S)=\{0\}$.
Since $ST_t\subset tF(S)$, we
have $ST_t\cap T'=\{0\}$, and $ST\supset ST_t \oplus T'$ implies
$$\dim ST \geq \dim ST_t +\dim T'.$$
From 
$$\dim S + \dim T_t+\dim T' -1 = \dim S +\dim T-1 > \dim ST,$$
we get
$$\dim S + \dim T_t -1 > \dim ST_t.$$
Now we get from the case $L=F(S)$ the existence of a subspace $K$ such
that $ST'K=ST'$ for any subspace $T'$ satisfying \eqref{eq:ST} and
included in $F(S)$, or in a $1$-dimensional $F(S)$-vector space.
Therefore we have $ST_tK\subset ST$ for every $t$ which concludes the
proof of Theorem~\ref{thm:kneser_onesided}.


\end{document}